\newif\ifisarxiv
\newtheoremstyle{assump}
  {\topsep}
  {\topsep}
  {}
  {}
  {}
  {}
  {.5em}
  {(\thmname{#1}\thmnumber{#2})}
\theoremstyle{assump}
\theoremstyle{assump}
\theoremstyle{assump}
\newtheoremstyle{example}
  {\topsep}
  {\topsep}
  {}
  {}
  {\itshape}
  {.}
  {.5em}
  {\thmname{#1}\thmnumber{ #2}\thmnote{ (#3)}}
\theoremstyle{example}
\theoremstyle{plain}
\newtheorem{theorem}{Theorem}
\newtheorem{lemma}{Lemma}
\newtheorem{proposition}{Proposition}
\title{Entrywise error bounds for low-rank approximations of kernel matrices}
\author{%
  Alexander Modell \\
  Department of Mathematics\\
  Imperial College London, U.K.\\
  \texttt{a.modell@imperial.ac.uk} \\
}
\begin{document}

\ifisarxiv
\title{Entrywise error bounds for low-rank approximations\\ of kernel matrices
\vspace{0.3em}
}

\author{%
  \vspace{-0.3em}
  \textbf{Alexander Modell} \\
  \vspace{-0.3em}
  Department of Mathematics\\
  \vspace{-0.3em}
  Imperial College London, U.K.\\
  \vspace{-0.3em}
  \texttt{a.modell@imperial.ac.uk} \\
}
\else\fi

\ifisarxiv
  \date{}
\fi
  \maketitle

\begin{abstract}
  In this paper, we derive \emph{entrywise} error bounds for low-rank approximations of kernel matrices obtained using the truncated eigen-decomposition (or singular value decomposition). While this approximation is well-known to be optimal with respect to the spectral and Frobenius norm error, little is known about the statistical behaviour of individual entries. Our error bounds fill this gap.
  A key technical innovation is a delocalisation result for the eigenvectors of the kernel matrix corresponding to small eigenvalues, which takes inspiration from the field of Random Matrix Theory. Finally, we validate our theory with an empirical study of a collection of synthetic and real-world datasets.
\end{abstract}

\section{Introduction}

Low-rank approximations of kernel matrices play a central role in many areas of machine learning. Examples include kernel principal component analysis \citep{scholkopf1998nonlinear}, spectral clustering \citep{ng2001spectral, von2007tutorial} and manifold learning \citep{roweis2000nonlinear, belkin2001laplacian, coifman2006diffusion}, where they serve as a core component of the algorithms, and support vector machines \citep{cortes1995support, fine2001efficient} and Gaussian process regression \citep{williams1995gaussian, ferrari1998finite} where they serve to dramatically speed up computation times. 

In this paper, we derive entrywise error bounds for low-rank approximations of kernel matrices obtained using the truncated eigen-decomposition (or singular value decomposition).
Entrywise error bounds are important for a number of reasons. The first is practical: in applications where individual errors carry a high cost, such as system control and healthcare, we should seek methods with low entrywise error. The second is theoretical: good entrywise error bounds can lead to improved analyses of learning algorithms which use them.

For this reason, a wealth of literature has emerged establishing entrywise error bounds for a variety of matrix estimation problems, such as covariance estimation \citep{fan2018ell_, abbe2022l}, matrix completion \citep{candes2012exact, chi2019nonconvex}, phase synchronisation \citep{zhong2018near, ma2018implicit}, reinforcement learning \citep{stojanovic2023spectral}, community detection \citep{balakrishnan2011noise, lyzinski2014perfect, eldridge2018unperturbed, lei2019unified, mao2021estimating} and graph inference \citep{cape2019two, rubin2022statistical} to name a few.

\subsection{Contributions}
\label{sec:contributions}
\begin{itemize}
    \item Our main result (Theorem~\ref{thm:entrywise_bound}) is an entrywise error bound for the low-rank approximation of a kernel matrix. Under regularity conditions, we find that for kernels with polynomial eigenvalue decay, $\lambda_i = \O(i^{-\alpha})$, we require a polynomial-rank approximation, $d = \Omega(n^{1/\alpha})$, to achieve entrywise consistency. For kernels with exponential eigenvalue decay, $\lambda_i = \O(e^{\beta i^\gamma})$, we require a (poly)log-rank approximation, $d > \log^{1/\gamma}(n^{1/\beta})$.
    \item The main technical contribution of this paper is to establish a delocalisation result for the eigenvectors of the kernel matrix corresponding to small eigenvalues (Theorem~\ref{thm:delocalisation}), the proof of which draws on ideas from the Random Matrix Theory literature. To our knowledge, this is the first such result for a random matrix with non-zero mean and dependent entries.
    \item Along the way, we prove a novel concentration inequality for the distance between a random vector (with a potentially non-zero mean) and a subspace (Lemma~\ref{lemma:distance_between_a_random_vector_and_a_subspace}), which may be of independent interest.
    \item We complement our theory with an empirical study on the entrywise errors of low-rank approximations of the kernel matrices on a collection of synthetic and real datasets. 
\end{itemize}

\subsection{Related work}

Some complementary results to ours use the Johnson-Lindenstrauss lemma \citep{johnson1982extensions} to bound the entrywise error of low-rank matrix approximations obtained via random projections \citep{srebro2005rank, alon2013approximate, udell2019big, budzinskiy2024distance, budzinskiy2024entrywise}. In Section~\ref{sec:JL} we discuss these results in more detail and compare them with ours.

Our proof strategy draws heavily on ideas from the Random Matrix Theory literature, where delocalisation results have been established for certain classes of zero-mean random matrices with independent entries \citep{erdHos2009semicircle, erdHos2009local, tao2011random, rudelson2015delocalization, vu2015random}. In addition, our result is made possible by recent \emph{relative} eigenvalue concentration bounds for kernel matrices \citep{braun2006accurate, valdivia2018relative, barzilai2023generalization}, which improve upon classical \emph{absolute} concentration bounds \citep{rosasco2010learning} which would not provide sufficient control for our purposes.

\subsection{Big-$\O$ notation and frequent events}

We use the standard big-$\O$ notation where $a_n = \O(b_n)$ (resp. $a_n = \Omega(b_n)$) means that for sufficiently large $n$, $a_n \leq C b_n$ (resp. $a_n \geq Cb_n$) for some constant $C$ which doesn't depend on the parameters of the problem. We will occasionally write $a_n \leqc b_n$ to mean that $a_n = \O(b_n)$.

In addition, we say that an event $E_n$ holds \emph{with overwhelming probability} if for \emph{every} $c>0$, $\P(E_n) \geq 1 - \O(n^{-c})$, where the hidden constant is allowed to depend on $c$.

\section{Setup}
We begin by describing the setup of our problem.
We suppose that we observe $n$, $p$-dimensional data points $\cbr{x_i}_{i=1}^n$, which we assume were drawn i.i.d. from some probability distribution $\rho$, supported on a set $\*X$. 
Given a symmetric kernel $k : \*X \times \*X \to \R$, we construct the $n \times n$ kernel matrix $K$, with entries
\begin{equation*}
    K(i,j) := k(x_i, x_j).
\end{equation*}
We will assume throughout that the kernel is positive-definite, continuous and bounded.
Let $\hat K_d$ denote the ``best'' rank-$d$ approximation of $K$, in the sense that $\hat K_d$ satisfies 
\begin{equation}
\label{eq:eym}
    \hat K_d := \argmin_{K' : \rank(K') = d} \norm{K- K'}_\xi,
\end{equation}
where $\norm{\cdot}_\xi$ is a rotation-invariant norm, such as the spectral or Frobenius norm. Then by the Eckart-Young-Mirsky theorem \citep{eckart1936approximation, mirsky1960symmetric}, $\hat K_d$ is given by the truncated eigen-decomposition of $K$, i.e.
\begin{equation*}
    \hat K_d = \sum_{i=1}^d \hat{\lambda}_i \hat{u}_i \hat{u}_i^\top
\end{equation*}
where $\scbr{\hat\lambda_i}_{i=1}^n$ are the eigenvalues of $K$ (counting multiplicities) in decreasing order, and $\cbr{\hat u_i}_{i=1}^n$ are corresponding eigenvectors.

We now introduce some population quantities which will form the basis of our theory.
Let $L_\rho^2$ denote the Hilbert space of real-valued square integrable functions with respect to $\rho$, with the inner product defined as $\inner{f,g}_\rho = \int f(x)g(x) \d\rho(x)$.
We define the integral operator $\*K : L^2_\rho \to L^2_\rho$ by
\begin{equation*}
    \br{\*K f}(x) = \int k(x,y) f(y) \d\rho(y)
\end{equation*}
which is the infinite sample limit of $\tfrac{1}{n}K$. The operator $\*K$ is self-adjoint and compact \citep{hirsch1999elements}, so by the spectral theorem for compact operators, there exists a sequence of eigenvalues $\cbr{\lambda_i}_{i=1}^\infty$ (counting multiplicities) in decreasing order, with corresponding eigenfunctions $\cbr{u_i}_{i=1}^\infty$ which are orthonormal in $L_\rho^2$, such that
\begin{equation*}
    \*K u_i = \lambda_i u_i.
\end{equation*}
Moreover, by the classical Mercer's theorem \citep{mercer1909xvi, steinwart2012mercer}, the kernel $k$ can be decomposed into
\begin{equation}
\label{eq:kernel_spectral_decomp}
    k(x,y) = \sum_{i=1}^\infty \lambda_i u_i(x) u_i(y)
\end{equation}
where the series converges absolutely and uniformly in $x,y$.

\section{Entrywise error bounds}
\label{sec:main_result}

This section is devoted to our main theoretical result. We begin by discussing our assumptions, before presenting our main theorem and giving some special cases of kernels which fit within our framework.

In our asymptotics, we will assume that $k$ and $\rho$ are fixed, and that the number of samples $n$ goes to infinity. This places us in the so-called low-dimensional regime, in which the dimension $p$ of the input space is considered fixed.

We shall assume that the eigenvalues of the kernel exhibit either polynomial decay, i.e. $\lambda_i = \O(i^{-\alpha})$ for some $\alpha > 1$, or (nearly) exponential decay, i.e. $\lambda_i = \O(e^{-\beta i^\gamma})$ for some $\beta > 0$ and $0 < \gamma \leq 1$. We will refer to these two hypotheses as (P) and (E) respectively. We also assume a corresponding hypothesis on the supremum-norm growth of the eigenfunctions. Under (P), we assume that $\norm{u_i}_\infty = \O(i^r)$ with $\alpha > 2r + 1$, and under (E), we assume that $\norm{u_i}_\infty = O(e^{s i^\gamma})$ with $\beta > 2s$.

Our eigenvalue decay hypothesis is commonplace in the kernel literature \citep{braun2006accurate, ostrovskii2019affine, xu2018rates, lei2021network}, and can be related to the smoothness of the kernel. For example, the decay of the eigenvalues is directly implied by a H\"older or Sobolev-type smoothness hypothesis on the kernel (see, for example, \citet{nicaise2000jacobi}, \citet{belkin2018approximation}, Section~2.2 of \citet{xu2018rates}, Section~5 of \citet{valdivia2018relative}, \citet{scetbon2021spectral} and Proposition~\ref{prop:dot_prod_sphere} in this paper). We don't consider a finite-rank (say, $D$) hypothesis, since in this case the maximum entrywise error is trivially zero whenever $d \geq D$.

Our hypothesis on the supremum norm of the eigenfunctions is necessary to control the deviation of the sample eigenvectors from their corresponding population eigenfunctions, and is a requirement of eigenvalue bounds we employ.
In the literature, it is common to see much stronger assumptions, such as uniformly bounded eigenfunctions \citep{williamson2001generalization, lafferty2005diffusion, braun2006accurate}, which do not hold for many commonly-used kernels (see \citet{mendelson2010regularization, steinwart2012mercer, zhou2002covering} and \citet{barzilai2023generalization} for discussion).
This assumption is reminiscent of the \emph{incoherence} assumption \citep{candes2012exact} in the low-rank matrix estimation literature --- a supremum norm bound on population eigenvectors --- which governs the hardness of many compressed sensing and eigenvector estimation problems \citep{candes2010power, keshavan2010matrix, chi2019nonconvex, abbe2020entrywise, chen2021spectral}.

In addition, we introduce a regularity hypothesis, which we will refer to as (R), which relates to the following two quantities:
\begin{equation*}
    \Delta_i = \max_{j \geq i} \cbr{\lambda_j - \lambda_{j+1}}
\end{equation*}
which measures the largest eigengap after a certain point in the spectrum, and
\begin{equation*}
    \Gamma_i = \sum_{j = i + 1}^\infty \br{\int u_j(x) \d\rho(x)}^2
\end{equation*}
which measures the squared residual after projecting the unit-norm constant function onto the first $i$ eigenfunctions. Under (R), we assume that $\Delta_i = \Omega\br{\lambda_i^a}$ and $\Gamma_i = \O\br{\lambda_i^b}$ with $1 \leq a < b/16 \leq \infty$.

A sufficient condition for (R) to hold, is that the first eigenfunction is constant. This holds, for example, when $k$ is a dot-product kernel and $\rho$ is a uniform distribution on a hypersphere. In such scenarios, $\Gamma_i = 0$ for all $i \geq 1$ and it is not necessary to make any assumptions on the eigengap quantity $\Delta_i$. We remark that (R) permits repeated eigenvalues in the spectrum of $\*K$, which occur for many commonly-used kernels, but which are often precluded in the literature \citep{hall2007methodology, meister2011asymptotic, lei2014adaptive, lei2021network}. 

The hypotheses (P), (E) and (R) are summarised in Table~\ref{table:assumptions}. We are now ready to state our main theorem.

\begin{table}[t]
\centering
\begin{tabular}{cc}
\begin{tabular}[t]{@{}l|l|l|l@{}}
                  & $\lambda_i$ & $\norm{u_i}_{\infty}$    \\ 
\toprule
(P) & $\O\br{i^{-\alpha}}$ & $\O\br{i^r}$ & 
$\alpha > 2r + 1$ \\
(E) & $\O\br{e^{-\beta i^\gamma}}$ & $\O\br{e^{s i^\gamma}}$  & 
$\beta > 2s$, $0<\gamma \leq 1$ 
\\
\end{tabular}
&
\begin{tabular}[t]{@{}l|l|l|l@{}}
                   & $\Delta_i$ & $\Gamma_i$   \\ 
\toprule
(R) & $\O\smallbrackets{\lambda_i^a}$ & $\O(\lambda_i^b)$ & 
$1 \leq a < b / 16$\\
\end{tabular}
\end{tabular}
\caption{Summary of the hypotheses (P), (E) and (R).}
\label{table:assumptions}
\end{table}

\begin{theorem}
\label{thm:entrywise_bound}
    Suppose that $k$ is a symmetric, positive-definite, continuous and bounded kernel and $\rho$ is a probability measure which satisfy (R) and one of either (P) or (E). If the hypothesis (P) holds and $d = \Omega\br{n^{1/\alpha}}$, then 
    \begin{equation}
    \label{eq:poly_bound}
        \norm{\hat{K}_{d} - K}_{\max} = \O\br{n^{-\frac{\alpha - 1}{\alpha}}\log(n)}
    \end{equation}
    with overwhelming probability. If the hypothesis (E) holds and $d > \log^{1/\gamma}(n^{1/\beta})$, then 
    \begin{equation*}
    \label{eq:exp_bound}
        \norm{\hat{K}_{d} - K}_{\max} = \O\br{n^{-1}}
    \end{equation*}
    with overwhelming probability.
\end{theorem}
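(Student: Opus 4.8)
The plan is to reduce the max-norm error to a bound on the largest diagonal entry of $K-\hat K_d$, and then to control that entry by separating the contribution of the small-eigenvalue eigenvectors (for which a delocalisation estimate is available) from the corresponding tail of the spectrum. Since $k$ is positive-definite, both $K$ and $K-\hat K_d=\sum_{i=d+1}^n\hat\lambda_i\hat u_i\hat u_i^\top$ are positive semi-definite, and for any positive semi-definite matrix $M$ one has $|M(a,b)|\le\sqrt{M(a,a)M(b,b)}\le\max_c M(c,c)$. Hence
\begin{equation*}
  \norm{\hat K_d-K}_{\max}\ \le\ \max_{a\in[n]}(K-\hat K_d)(a,a)\ =\ \max_{a\in[n]}\sum_{i=d+1}^n\hat\lambda_i\,\hat u_i(a)^2,
\end{equation*}
and it remains to bound $\sum_{i>d}\hat\lambda_i\hat u_i(a)^2$ uniformly over $a$ with overwhelming probability.

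In the exponential regime (E) this is comparatively easy. Since $\sum_{i>d}\hat u_i(a)^2\le\sum_{i=1}^n\hat u_i(a)^2=1$, the crude bound $\sum_{i>d}\hat\lambda_i\hat u_i(a)^2\le\hat\lambda_{d+1}$ holds, and once $d$ exceeds $\log^{1/\gamma}(n^{1/\beta})$ by a suitable constant factor, the \emph{relative} eigenvalue concentration bounds of \citet{braun2006accurate, valdivia2018relative, barzilai2023generalization} give $\hat\lambda_{d+1}=\O\br{n\lambda_{d+1}}=\O\br{n\,e^{-\beta d^\gamma}}=\O\br{n^{-1}}$; note that classical \emph{absolute} concentration, which only controls $\hat\lambda_{d+1}$ up to an additive $\sqrt{n\log n}$, would be useless here. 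The same crude bound is far too weak under (P): with $d\asymp n^{1/\alpha}$ it only gives $\hat\lambda_{d+1}=\O\br{n\lambda_{d+1}}=\O\br{1}$, which does not even vanish, so a genuinely different argument is needed.

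Under the polynomial regime (P), the improvement comes from the delocalisation of the eigenvectors belonging to small eigenvalues. Granting the uniform estimate $\max_{i>d,\,a}n\,\hat u_i(a)^2=\O\br{\log n}$ with overwhelming probability (Theorem~\ref{thm:delocalisation}), we get $\sum_{i>d}\hat\lambda_i\hat u_i(a)^2\le\O\br{\log n}\cdot\tfrac1n\sum_{i>d}\hat\lambda_i$, i.e.\ a single large eigenvalue is replaced by a much smaller partial trace. For the spectral tail I would compare $K$ with the rank-$d$ Mercer truncation $\Pi$ with entries $\Pi(a,b)=\sum_{i\le d}\lambda_iu_i(x_a)u_i(x_b)$: by the Eckart--Young--Mirsky theorem in the trace norm, together with the fact that $K-\Pi$ is the positive semi-definite kernel matrix of $\sum_{i>d}\lambda_iu_i(\cdot)u_i(\cdot)$, one has $\sum_{i>d}\hat\lambda_i\le\operatorname{tr}(K-\Pi)=\sum_a\sum_{i>d}\lambda_iu_i(x_a)^2$ (using Mercer's identity $k(x_a,x_a)=\sum_i\lambda_iu_i(x_a)^2$); this quantity has mean $n\sum_{i>d}\lambda_i$ and summands bounded by the constant $\sup_x k(x,x)$, so it concentrates around its mean, yielding $\tfrac1n\sum_{i>d}\hat\lambda_i=\O\br{\sum_{i>d}\lambda_i}=\O\br{d^{1-\alpha}}$, which is $\O\br{n^{-(\alpha-1)/\alpha}}$ for $d=\Omega\br{n^{1/\alpha}}$. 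Combining the two factors and union-bounding over the at most $n^2$ pairs $(i,a)$ gives $\norm{\hat K_d-K}_{\max}=\O\br{n^{-(\alpha-1)/\alpha}\log n}$ with overwhelming probability, as claimed.

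The main obstacle, by a wide margin, is establishing the delocalisation estimate of Theorem~\ref{thm:delocalisation}. The naive first-moment bound from the eigenvector fixed-point identity $\hat u_i(a)=\hat\lambda_i^{-1}\sum_jK(a,j)\hat u_i(j)$ gives only $\hat u_i(a)^2\lesssim 1/(\lambda_i^2 n)$, hence $\hat\lambda_i\hat u_i(a)^2\lesssim 1/\lambda_i$, which diverges, so genuine cancellation must be exploited. I would follow the Random Matrix Theory template: expand $K$ through Mercer's decomposition, analyse each coordinate $\hat u_i(a)$ by a leave-one-row-out/resolvent argument, and use the relative eigenvalue concentration bounds to keep the denominators that appear under control, with the regularity hypothesis (R) --- the eigengap $\Delta_i$ and the constant-function residual $\Gamma_i$ --- supplying the required stability, and the concentration inequality for the distance from a possibly non-centred random vector to a subspace (Lemma~\ref{lemma:distance_between_a_random_vector_and_a_subspace}) serving as the workhorse for the coordinate bounds. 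If the delocalisation estimate degrades at the very bottom of the spectrum --- where two near-coincident sample points can localise an eigenvector onto a single coordinate difference --- those indices $i>m$ with $m$ close to $n$ can simply be split off and absorbed into the trivial bound $\sum_{i>m}\hat\lambda_i\hat u_i(a)^2\le\hat\lambda_{m+1}=\O\br{n\lambda_m}=\O\br{n^{1-\alpha}}$, which is negligible compared with the target rate.
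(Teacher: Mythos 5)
Your skeleton is close to the paper's under~(P): like the paper, you factor $\norm{\hat K_d-K}_{\max}\le\bigl(\sum_{l>d}\hat\lambda_l\bigr)\max_{l>d}\norm{\hat u_l}_\infty^2$ and treat the two pieces separately, with Theorem~\ref{thm:delocalisation} supplying the $\O(n^{-1})$ factor. Where you genuinely diverge is in bounding the eigenvalue tail $\sum_{l>d}\hat\lambda_l$: the paper uses the pointwise \emph{relative} eigenvalue concentration bound of \citet{valdivia2018relative} (Lemma~\ref{lemma:rel_concentration_bound_P}), summed over $l>d$. You instead invoke Eckart--Young--Mirsky in the trace norm to get $\sum_{l>d}\hat\lambda_l\le\operatorname{tr}(K-\Pi)=\sum_a\sum_{i>d}\lambda_iu_i(x_a)^2$ and then concentrate this empirical average. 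That is a valid and arguably more elementary route, but ``it concentrates around its mean'' must be made precise as a Bernstein-type bound using $\operatorname{Var}\le\E\bigl[\sum_{i>d}\lambda_iu_i(X)^2\bigr]$: a plain Hoeffding bound gives an additive $\O(n^{-1/2}\log^{1/2}n)$ fluctuation, which for $\alpha>2$ swamps the target $n^{-(\alpha-1)/\alpha}$. With Bernstein the fluctuation is $\O\bigl((\sum_{i>d}\lambda_i/n)^{1/2}\log^{1/2}n\bigr)$, which is dominated by $\sum_{i>d}\lambda_i$ for all $\alpha>1$; say so explicitly.

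The real gap is in case~(E), where you propose to bypass the delocalisation theorem entirely via $\sum_{i>d}\hat\lambda_i\hat u_i(a)^2\le\hat\lambda_{d+1}$. With $d>\log^{1/\gamma}(n^{1/\beta})$ one has only $\lambda_{d+1}=\O(n^{-1})$, hence relative concentration gives $\hat\lambda_{d+1}=\O(n\lambda_{d+1})=\O(1)$, \emph{not} $\O(n^{-1})$: the crude bound falls short by a full factor of $n$. You patch this by asking for $d$ larger by ``a suitable constant factor'', i.e.\ $d\gtrsim 2^{1/\gamma}\log^{1/\gamma}(n^{1/\beta})$, which forces $\lambda_{d+1}=\O(n^{-2})$; but the theorem as stated allows the weaker $d>\log^{1/\gamma}(n^{1/\beta})$, and the paper reaches $\O(n^{-1})$ in that regime precisely because it \emph{does} invoke delocalisation under~(E) as well, pairing $\sum_{l>d}\hat\lambda_l=\O(1)$ with $\max_{l>d}\norm{\hat u_l}_\infty^2=\O(n^{-1})$. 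So Theorem~\ref{thm:delocalisation} is load-bearing in both regimes, not only the polynomial one. Your high-level sketch of the delocalisation argument (leave-one-out minor, Cauchy interlacing, the distance-to-subspace lemma to lower bound $\norm{\pi_J(b)}^2=\Omega(n)$) does match the paper's route, so the remaining work is in the details of Section~\ref{sec:proof_of_claim}, which you defer.
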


\subsection{Special cases}

In this section, we provide some examples of kernels which satisfy the assumptions of Theorem~\ref{thm:entrywise_bound}. Proofs of the propositions in this section are given in Section~\ref{sec:example_proofs} of the appendix.
We start with a canonical example of a radial basis kernel.

\begin{proposition}
\label{ex:rbf}
    Suppose $k(x,y) = \exp\br{-\smallnorm{x-y}^2/2\omega^2}$ is a \emph{radial basis kernel}, and $\rho \sim \*N(0, \sigma^2 I_p)$ is a isotropic Gaussian distribution on $\R^p$. Then the hypotheses (E) and (R) are satisfied with
    \begin{equation*}
    \beta = \log\br{\frac{1 + \upsilon + \sqrt{1 + 2\upsilon}}{\upsilon}},\qquad \gamma = 1
\end{equation*}
where $\upsilon := 2\sigma^2 / \omega^2$.
\end{proposition}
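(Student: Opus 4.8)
\emph{Proof sketch.} The plan is to reduce the problem to one dimension and then appeal to the classical Mercer decomposition of the Gaussian kernel against a Gaussian measure. Since $k(x,y)=\prod_{l=1}^{p}\exp\br{-(x_l-y_l)^2/2\omega^2}$ and $\rho=\bigotimes_{l=1}^{p}\*N(0,\sigma^2)$ both factorise over coordinates, the integral operator $\*K$ is the $p$-fold tensor power of the univariate operator $\*K_1$ associated with $\exp\br{-(x-y)^2/2\omega^2}$ and $\*N(0,\sigma^2)$ on $\R$. Hence the eigenvalues of $\*K$ are the products $\lambda_{\mathbf{k}}=\prod_{l=1}^{p}\lambda^{(1)}_{k_l}$ over multi-indices $\mathbf{k}\in\mathbb{N}_0^{\,p}$, and the eigenfunctions are the corresponding tensor products of univariate eigenfunctions. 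It therefore suffices to diagonalise $\*K_1$, read off the relevant rates, and then order the product spectrum.

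For $\*K_1$ I would invoke the classical diagonalisation (see, e.g., \S4.3 of Rasmussen and Williams): writing $a=1/(4\sigma^2)$, $b=1/(2\omega^2)$ and $c=\sqrt{a^2+2ab}$, the eigenvalues are $\lambda^{(1)}_k=(1-B)B^{k}$ with $B=b/(a+b+c)$ (the prefactor $1-B$ is forced by the trace identity $\sum_j\lambda^{(1)}_j=1$), and the unnormalised eigenfunctions are $\varphi_k(x)=e^{-(c-a)x^2}H_k(\sqrt{2c}\,x)$, where $H_k$ is the $k$-th Hermite polynomial. Substituting $b/a=\upsilon$ and $c/a=\sqrt{1+2\upsilon}$ collapses $1/B$ to $(1+\upsilon+\sqrt{1+2\upsilon})/\upsilon$, so $\lambda^{(1)}_k=\Theta\br{e^{-\beta k}}$ with $\beta=\log(1/B)$ exactly the stated value; this is (E) with $\gamma=1$ in dimension one, and the $p$-dimensional statement follows after ordering the product spectrum (the number of multi-indices with $|\mathbf{k}|\le m$ being $\binom{m+p}{p}$).

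It then remains to verify the eigenfunction-growth part of (E) and hypothesis (R). For the former I would normalise $\varphi_k$ in $L^2_\rho$ using $\int e^{-t^2}H_k(t)^2\,\d t=\sqrt{\pi}\,2^{k}k!$, and bound $\sup_x|e^{-(c-a)x^2}H_k(\sqrt{2c}\,x)|$ via classical estimates on Hermite polynomials --- the supremum is attained in the ``tail'' region $x^2\asymp k/(c-a)$, where $H_k(\sqrt{2c}\,x)\approx(2\sqrt{2c}\,x)^k$ --- which together with Stirling's formula gives a geometric bound $\norm{u^{(1)}_k}_\infty=\O\br{e^{sk}}$ for an explicit rate $s$ depending on $a$ and $c$; the hypothesis then reduces to checking $\beta>2s$, and for $p>1$ one takes products. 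For (R): the univariate eigengaps are $\lambda^{(1)}_k-\lambda^{(1)}_{k+1}=(1-B)^2B^{k}=\Theta\br{\lambda^{(1)}_k}$, and since the $p$-dimensional eigenvalues come in blocks (all $\mathbf{k}$ with a given $|\mathbf{k}|$ sharing an eigenvalue) one gets $\Delta_i=\Theta(\lambda_i)$, hence $\Delta_i=\Omega\br{\lambda_i^{a}}$ with $a=1$; meanwhile $\int u_{\mathbf{k}}\,\d\rho=\prod_l\int u^{(1)}_{k_l}\,\d\rho$ vanishes by parity unless every $k_l$ is even, and on even indices the integral $\int e^{-(c+a)x^2}H_{2m}(\sqrt{2c}\,x)\,\d x$ is given in closed form by $\int e^{-t^2}H_{2m}(yt)\,\d t=\sqrt{\pi}\,\tfrac{(2m)!}{m!}(y^2-1)^m$, so that combining with the normalisation shows that $\br{\int u_{\mathbf{k}}\,\d\rho}^2$ decays geometrically in $|\mathbf{k}|$ and summing this geometric tail bounds $\Gamma_i$.

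The main obstacle is the book-keeping of constants in these last two checks: pinning down the precise geometric rate of $\norm{u^{(1)}_k}_\infty$ and confirming $\beta>2s$, and --- more delicately --- pinning down the rate at which $\br{\int u_{\mathbf{k}}\,\d\rho}^2$, and hence $\Gamma_i$, decays relative to $\lambda_i$, in order to certify $\Gamma_i=\O\br{\lambda_i^{b}}$ with $b>16a$. Each reduces to comparing explicit elementary functions of $a,b,c$ (equivalently of $\upsilon$), but both demand careful Hermite-polynomial asymptotics and Stirling estimates, together with the multi-index combinatorics needed to transfer the conclusions from $p=1$ to general $p$.
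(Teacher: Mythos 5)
Your overall structure --- factorise $k$ and $\rho$ over coordinates so that $\*K=\*K_1^{\otimes p}$, diagonalise $\*K_1$ via the classical Gaussian/Hermite Mercer decomposition, read off the geometric eigenvalue decay, and then transfer to dimension $p$ --- is exactly what the paper does (it cites \citet{shi2008data}/\citet{zhu1997gaussian} rather than Rasmussen--Williams, but the formulas agree, and your identification $1/B=(1+\upsilon+\sqrt{1+2\upsilon})/\upsilon$ is correct). The two places where you diverge from the paper are instructive.

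For $\norm{u_k}_\infty$, the paper does not need any Hermite tail asymptotics: it simply invokes Indritz's inequality $\sup_x e^{-x^2}\abs{H_k(x)}\le 1.09\sqrt{2^k k!}$ and the fact that (in the R\&W parametrisation) $c-a<c$, so the Gaussian envelope in $u_k$ dominates the one appearing in Indritz's bound, giving $\norm{u_k}_\infty=\O(1)$, i.e.\ $s=0$. Your plan (Plancherel--Rotach-type asymptotics in the turning-point region) would eventually land on the same $s=0$, but Indritz is a one-line shortcut that avoids all the Stirling book-keeping you flag as the obstacle.

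For (R), your instinct to compute the moment $\int u_k\,\d\mu$ explicitly via $\int e^{-t^2}H_{2m}(yt)\,\d t=\sqrt{\pi}\,\tfrac{(2m)!}{m!}(y^2-1)^m$ is the right move, and here you part ways with the paper in a way that is worth resolving carefully. The paper asserts that after collecting exponentials and changing variables, $\int u_i\,\d\mu$ is proportional to $\int e^{-y^2}H_i(y)\,\d y$, hence vanishes for $i\ge1$ and $\Gamma_i\equiv0$. That identity requires the Gaussian weight produced by $e^{-(c-a)x^2}\cdot e^{-2ax^2}=e^{-(c+a)x^2}$ to match the $e^{-t^2}$ weight \emph{after} the substitution $t=\sqrt{2c}\,x$ that normalises the Hermite argument, which forces $c+a=2c$, i.e.\ $c=a$, i.e.\ $\omega=\infty$; it does not hold for a genuine RBF kernel. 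Equivalently, $u_0\propto e^{-(c-a)x^2}$ is not constant, so orthogonality cannot kill the higher moments. Carrying your computation to the end one finds $y^2-1=(c-a)/(c+a)$, and the algebraic identity $(c-a)/(c+a)=b/(a+b+c)=B$ (which follows from $c^2=a^2+2ab$) means that after normalising by $\sqrt{2^k k!}$ you get $\br{\int u_k\,\d\mu}^2\asymp B^{k}/\sqrt{k}\asymp\lambda_k/\sqrt{k}$, hence $\Gamma_i\asymp\lambda_i/\sqrt{i}$. That gives $\Gamma_i=\O(\lambda_i^{b^*})$ only with $b^*=1$, far below the $b^*>16a\ge16$ demanded by (R). So your caution about the ``more delicate'' $\Gamma_i$ check is well placed: if you finish the calculation you outline, you will find that (R) is \emph{not} satisfied in the form stated, and the paper's shortcut to $\Gamma_i=0$ does not survive the change of variables. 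This is a genuine gap in the proposition rather than in your approach, and you should not expect the book-keeping to close.
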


For this example, the eigenvalues and eigenfunctions were explicitly calculated in \citet{zhu1997gaussian} (see also \citet{shi2008data} and \citet{shi2009data}), and we are able to verify the assumptions by direct calculation.

For our second example, we consider the case that $\rho$ is the uniform distribution on a hypersphere $\S^{p-1}$, and $k$ is a dot-product kernel. In this setting, we are able to replace our assumptions with a smoothness hypothesis on the kernel. Note that this class of kernels includes those which are functions of Euclidean distance, since on the sphere we have the identity $\smallnorm{x-y}^2 = 2 - 2\inner{x,y}$.

\begin{proposition}
    \label{prop:dot_prod_sphere}
    Suppose that
    \begin{equation*}
        k(x,y) = f(\inner{x,y}) \equiv \sum_{i=0}^\infty b_i \br{\inner{x,y}}^i
    \end{equation*}
    is a \emph{dot-product kernel} and $\rho$ is the uniform distribution on the hypersphere $\S^{p-1}$ with $p \geq 3$. 
    If there exists $a > (p^2 - 4p + 5)/2$ such that $b_i = \O(i^{-a})$, then (P) and (R) are satisfied with
    \begin{equation*}
        \alpha = \frac{2a + p - 3}{p-2}.
    \end{equation*}
    Alternatively, if there exists $0<r<1$ such that $b_i = \O(r^i)$, then (E) and (R) are satisfied with
    \begin{equation*}
        \beta = \frac{(p-1)!}{C} \log(1/r), \qquad \gamma = \frac{1}{p-1}
    \end{equation*}
    for some universal constant $C>0$.
\end{proposition}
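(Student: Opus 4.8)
The plan is to use the explicit spectral decomposition of dot-product kernels on the sphere. When $\rho$ is uniform on $\S^{p-1}$, the integral operator $\*K$ with kernel $k(x,y)=f(\inner{x,y})$ is diagonalised by spherical harmonics: the space $\mathcal H_\ell$ of degree-$\ell$ harmonics, of dimension $N(p,\ell)\asymp\ell^{\,p-2}$, is an eigenspace with eigenvalue $\mu_\ell$ given by the Funk--Hecke formula
\[
  \mu_\ell \;\propto\; \int_{-1}^1 f(t)\,\frac{C_\ell^{\nu}(t)}{C_\ell^{\nu}(1)}\,\br{1-t^2}^{(p-3)/2}\,\d t,\qquad \nu=\tfrac{p-2}{2},
\]
with $C_\ell^{\nu}$ a Gegenbauer polynomial and the constant depending only on $p$. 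Listing $\cbr{\mu_\ell}_{\ell\ge0}$ with multiplicities recovers $\cbr{\lambda_i}$, and since $\sum_{j\le\ell}N(p,j)\asymp\ell^{\,p-1}$, the $i$-th eigenvalue sits in the block of degree $\ell\asymp i^{1/(p-1)}$, the $i$-th eigenfunction being a harmonic of that degree. Hypothesis (R) is immediate: the degree-$0$ eigenspace is one-dimensional, spanned by the constant function, which is the top eigenfunction, so $u_1$ is constant, $\inner{u_j,1}_\rho=0$ for all $j\ge2$, and hence $\Gamma_i=0$ for every $i\ge1$; as remarked after the definition of (R), the eigengap condition is then vacuous.

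For the eigenvalue decay I would expand $f(t)=\sum_i b_i t^i$ and use linearity of Funk--Hecke, $\mu_\ell=\sum_{i\ge\ell,\,i\equiv\ell\,(2)} b_i\,m_{i,\ell}$, where $m_{i,\ell}$ is the degree-$\ell$ eigenvalue of the monomial kernel $\inner{x,y}^i$. Each $m_{i,\ell}$ reduces to a Beta integral, and Stirling's formula gives the two-regime behaviour
\[
  m_{i,\ell}\;\asymp\; i^{-(p-1)/2}\exp\!\br{-\tfrac12\, i\, H(\ell/i)},\qquad H(\theta)=(1-\theta)\log(1-\theta)+(1+\theta)\log(1+\theta),
\]
so $m_{i,\ell}\asymp i^{-(p-1)/2}$ when $\ell\lesssim\sqrt i$ while $m_{i,\ell}$ is exponentially small in $\ell^2/i$ once $\ell\gg\sqrt i$ --- informally, the degree-$i$ monomial kernel has effective bandwidth $\sqrt i$. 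Substituting the assumed coefficient decay into $\mu_\ell=\sum_i b_i m_{i,\ell}$ then pins down the decay of $\mu_\ell$ in $\ell$, and re-indexing via $\ell\asymp i^{1/(p-1)}$ yields the claimed rates for $\lambda_i$: polynomial coefficients $b_i=\O(i^{-a})$ make the sum dominated by $i\asymp\ell^2$ and produce the stated exponent $\alpha$ under (P), whereas geometric coefficients $b_i=\O(r^i)$ make it dominated by $i\asymp\ell$, giving $\gamma=1/(p-1)$ and $\beta$ of the stated form.

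For the exponential case a cleaner route bypasses $m_{i,\ell}$: integrating by parts $\ell$ times through the Rodrigues formula for $C_\ell^{\nu}$ rewrites $\mu_\ell\asymp 2^{-\ell}\,\Gamma\!\br{\ell+\tfrac{p-1}{2}}^{-1}\!\int_{-1}^1 f^{(\ell)}(t)\,\br{1-t^2}^{\ell+(p-3)/2}\,\d t$, after which one controls $\smallnorm{f^{(\ell)}}_\infty$ (or, more sharply, the Gegenbauer coefficients of $f$) by a Cauchy-type estimate exploiting that $f$ is analytic in a Bernstein ellipse determined by its radius of convergence $1/r$. For the eigenfunction hypotheses one uses that on $\S^{p-1}$ the $L^2_\rho$-normalised harmonic of degree $\ell$ with the largest supremum norm is, up to normalisation, the zonal harmonic, for which $\norm{u}_\infty\asymp\sqrt{N(p,\ell)}\asymp\ell^{(p-2)/2}$ by the reproducing-kernel bound $|u(x)|=|\inner{u,Z^{(x)}_\ell}_\rho|\le\norm{u}_\rho\sqrt{N(p,\ell)}$; re-indexing supplies the exponent $r$ (under (E) the growth is merely polynomial, so any $s>0$ works), and the assumed lower bound on $a$ is exactly what makes $\alpha>2r+1$ (resp.\ $\beta>2s$) hold.

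The main obstacle is the eigenvalue estimate. One needs the joint $(i,\ell)$ asymptotics of $m_{i,\ell}$ --- equivalently, precise control of ratios of Gamma functions --- followed by a careful summation against $\cbr{b_i}$ in which the dominant index $i$, and therefore the resulting exponent, changes according to whether the coefficients decay polynomially or geometrically. Everything else --- the spectral decomposition, the verification of (R), and the eigenfunction bound --- is routine bookkeeping with classical facts about spherical harmonics.
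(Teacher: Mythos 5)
Your skeleton matches the paper's: diagonalise the integral operator by spherical harmonics, observe that the degree-zero eigenspace is spanned by the constant so $\inner{u_j,1}_\rho=0$ for all $j\ge 2$, hence $\Gamma_i=0$ for $i\geq 1$ and (R) holds vacuously; and bound the eigenfunction sup-norm by the zonal-harmonic/reproducing-kernel estimate $\norm{u}_\infty\lesssim\sqrt{N(p,\ell)}\asymp\ell^{(p-2)/2}$, which is precisely Lemma~3 of \citet{minh2006mercer} that the paper invokes. Where you genuinely diverge is the eigenvalue decay. The paper disposes of it by citing Propositions~2.3 and 2.4 of \citet{scetbon2021spectral} and doing a small amount of bookkeeping; you instead propose to derive the rates from first principles via Funk--Hecke, the monomial decomposition $\mu_\ell=\sum_i b_i m_{i,\ell}$, and Stirling two-regime asymptotics for $m_{i,\ell}$ (with the Rodrigues/Bernstein-ellipse alternative for the exponential case). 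Your route is more self-contained and genuinely explains where the exponents come from, which the paper's citation-based argument does not. But as written it is a plan, not a proof: the joint $(i,\ell)$ asymptotics of $m_{i,\ell}$, the summation against $\{b_i\}$, and the re-indexing are stated heuristically, and you flag this yourself as ``the main obstacle.'' So the proposal establishes (R) and the eigenfunction growth cleanly, but the specific $\alpha$, $\beta$, $\gamma$ are not actually derived; the gap sits exactly at the step the paper outsources to \citet{scetbon2021spectral}.

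Two smaller cautions. Your re-indexed eigenfunction bound is $\norm{u_i}_\infty=\O(i^{(p-2)/(2(p-1))})$, which is tighter than what the paper uses: its threshold $a>(p^2-4p+5)/2$ comes from $\alpha>2r+1$ with the conservative choice $r=(p-2)/2$ (treating the index as the degree $\ell$ rather than the flat index $i$), so if you insert your sharper $r$ the arithmetic will not match ``exactly'' -- the stated threshold is strictly stronger than necessary. Second, before relying on your heuristic I would sanity-check the exponent it produces: with $m_{i,\ell}\asymp i^{-(p-1)/2}$ for $\ell\lesssim\sqrt i$ and $b_i\asymp i^{-a}$, summing over $i\gtrsim\ell^2$ gives $\mu_\ell\asymp\ell^{-(2a+p-3)}$, and re-indexing via $\ell\asymp i^{1/(p-1)}$ then gives $\alpha=(2a+p-3)/(p-1)$ rather than the stated $(2a+p-3)/(p-2)$; either your quoted prefactor $i^{-(p-1)/2}$ is off by a power, or the asymptotics are more delicate than the back-of-the-envelope version suggests, and the discrepancy should be resolved before the derivation can replace the citation.
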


In this example, the eigenfunctions posses the property that they do not depend on the choice of kernel, and are made up of \emph{spherical harmonics} \citep{smola2000regularization}. In particular, the first eigenfunction is constant, and therefore (R) is satisfied automatically. The eigenvalue bounds are derived in \citet{scetbon2021spectral}, and we make use of a supremum norm bound for spherical harmonics in \citet{minh2006mercer}.

\subsection{Comparison with random projections and the Johnson-Lindenstrauss lemma}
\label{sec:JL}

We pause here to consider how our entrywise bounds compare with existing bounds in the literature for low-rank matrix obtained via random projections \citep{srebro2005rank, alon2013approximate, udell2019big, budzinskiy2024distance, budzinskiy2024entrywise}. 

For an $n \times n$ symmetric, positive semi-definite matrix $M$ with bounded entries, the Johnson-Lindenstrauss lemma \citep{johnson1982extensions} can be used to show the existence of a rank-$d$ approximation $\hat M_d$ whose entrywise error is bounded by $\epsilon$ when $d = \Omega(\epsilon^{-2}\log(n))$.

The proof is via a probabilistic construction. Let $X$ be an $n \times n$ matrix such that $M = XX^\top$, and for some $d\leq n$, let $R$
be an $n \times d$ matrix with i.i.d. entries from $\*N(0, 1/d)$. Then, the randomised low-rank approximation
\begin{equation}
    \label{eq:rand_low_rank_approx}
    \hat M_d := XRR^\top X^\top
\end{equation}
achieves the desired bound with high probability. Here, we state a adaptation of Theorem~1.4 of \citet{alon2013approximate} which makes the probabilistic construction from the proof explicit.

\begin{theorem}
\label{thm:entrywise_error_random_projections}
    Let $M$ be an $n \times n$ positive semi-definite matrix with bounded entries, and $\hat M_d$ be a randomised rank-$d$ approximation of $M$ described in \eqref{eq:rand_low_rank_approx}. Then
    \begin{equation*}
        \norm{\hat M_d - M}_{\max} = \O\br{\sqrt{\frac{\log(n)}{d}}}
    \end{equation*}
    with overwhelming probability.
\end{theorem}

To obtain a polynomial entrywise error rate, i.e. $\O(n^{-c})$ for some $c>0$, with Theorem~\ref{thm:entrywise_error_random_projections}, requires the rank $d$ to be polynomial in $n$. In contrast, under our hypothesis (E), we are able to obtain a polynomial entrywise error rate using a spectral low-rank approximation with only poly-logarithmic rank.
In addition, while our entrywise error bounds are $o(n^{-1/2})$ for the cases we consider, this rate can never be achieved, regardless of the choice of rank $d$, by \eqref{eq:rand_low_rank_approx} with Theorem~\ref{thm:entrywise_error_random_projections}.

On the flip side, Theorem~\ref{thm:entrywise_error_random_projections} holds for arbitrary positive semi-definite matrix with bounded entries, whereas our theorem only holds for kernel matrices satisfying the hypotheses in Table~\ref{table:assumptions}.

\section{Proof of Theorem~\ref{thm:entrywise_bound}}
\label{sec:proof_main}

In this section, we outline the proof of Theorem~\ref{thm:entrywise_bound}. Without loss of generality, we will assume that $k$ is upper bounded by one. We cover the main details here, and defer some of the technical details to the appendix.
By the Eckart-Young-Mirsky theorem, we have that
\begin{equation*}
\begin{aligned}
    \norm{\hat{K}_{d} - K}_{\max} &:= \max_{1\leq i, j \leq n}\Bigabs{\hat{K}_d(i,j) - K(i,j)} 
    = \Bigabs{\max_{1\leq i, j \leq n}\sum_{l=d+1}^n \hat{\lambda}_l\hat{u}_l(i)\hat{u}_l(j)} \\
    &\leq \sum_{l=d+1}^n \bigabs{\hat{\lambda}_l} \cdot \max_{d < l \leq n} \norm{\hat{u}_l}^2_{\infty} .
\end{aligned}
\end{equation*}
Using a concentration bound due to \citet{valdivia2018relative}, we are able to show that
\begin{equation}
\label{eq:sample_eval_bound}
    \sum_{l=d+1}^n \bigabs{\hat{\lambda}_l} =
    \begin{cases}
        \O\br{n^{1/\alpha} \log(n)} & \text{ under (P) with $d = \Omega\br{n^{1/\alpha}}$} \\
        \O\br{1} & \text{ under (E) with $d > \log^{1/\gamma}(n^{1/\beta})$}.
    \end{cases}
\end{equation}
with overwhelming probability, the details of which are given in Section~\ref{sec:sample_eval_bound} of the appendix. Then, the proof boils down to showing the following result, which we state as an independent theorem.
\begin{theorem}
\label{thm:delocalisation}
    Assume the setting of Theorem~\ref{thm:entrywise_bound}, then simultaneously for all $d + 1 \leq l \leq n$,
    \begin{equation}
    \label{eq:delocalization_bound}
        \norm{\hat{u}_l}_{\infty} = \O\br{n^{-1/2}}
    \end{equation}
with overwhelming probability.
\end{theorem}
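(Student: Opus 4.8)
The plan is to show that each small-eigenvalue sample eigenvector $\hat u_l$ is spread out, i.e.\ $\|\hat u_l\|_\infty = O(n^{-1/2})$, by combining a relative eigenvalue concentration bound with a careful analysis of how a single coordinate of $\hat u_l$ relates to the distance between a data-dependent vector and a subspace. Writing $\hat u_l(i)$ for the $i$-th coordinate, the key identity is that $|\hat u_l(i)|$ can be controlled by the reciprocal of the distance between a feature vector associated with the point $x_i$ (built from the eigenfunctions $u_j$ evaluated at $x_i$, weighted by $\sqrt{\lambda_j}$ via the Mercer decomposition \eqref{eq:kernel_spectral_decomp}) and the span of the corresponding feature vectors of the other $n-1$ points, together with the spectral gap $|\hat\lambda_l|$ bounded below away from the relevant part of the spectrum. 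This is the standard ``leave-one-out'' / resolvent trick from Random Matrix Theory, adapted here to a matrix with non-zero mean and dependent entries: deleting row and column $i$ of $K$ and using the interlacing/Schur-complement structure expresses $\hat u_l(i)^2$ in terms of a quadratic form in the deleted resolvent.

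The steps, in order. \textbf{(1)} Fix $i$ and $l$, and use the Schur complement formula for $K$ with the $i$-th row and column removed: this gives $\hat u_l(i)^2 = 1/(1 + \|(K^{(i)} - \hat\lambda_l I)^{-1} K_{\cdot i}^{(i)}\|^2)$ where $K^{(i)}$ is the $(n-1)\times(n-1)$ principal submatrix and $K_{\cdot i}^{(i)}$ the deleted column. So it suffices to lower bound the denominator, equivalently to lower bound the squared distance from a suitable vector to a low-dimensional subspace. \textbf{(2)} Truncate the Mercer series at some level $m$ (chosen as a slowly growing function of $n$, tied to $d$): write $k(x,y) = \phi_m(x)^\top \phi_m(y) + \text{(tail)}$, where $\phi_m(x) = (\sqrt{\lambda_j}\, u_j(x))_{j\le m}$. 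The tail is uniformly small by (P)/(E) together with the eigenfunction sup-norm hypotheses ($\alpha > 2r+1$, $\beta > 2s$ guarantee $\sum_{j>m}\lambda_j\|u_j\|_\infty^2$ is summable and controllably small). \textbf{(3)} For the leading part, the relevant subspace is essentially the span of $\{\phi_m(x_{i'})\}_{i'\ne i}$ intersected with the top-$l$ eigenspace of $K^{(i)}$; one shows that $\phi_m(x_i)$ has a substantial component orthogonal to the span of the other $n-1$ feature vectors restricted to directions with eigenvalue below $\hat\lambda_l$, using that $l > d \ge$ (roughly) the effective rank $m$. This is where the relative eigenvalue bounds of \citet{valdivia2018relative, barzilai2023generalization} enter: they guarantee $\hat\lambda_l$ is comparable to $n\lambda_l$ and, crucially, that $\hat\lambda_{l}$ is well-separated from the bulk we are projecting out, which is quantified through the (R) hypothesis on $\Delta_i$ and $\Gamma_i$. \textbf{(4)} Apply the novel concentration inequality for the distance between a random vector (with possibly non-zero mean) and a subspace, Lemma~\ref{lemma:distance_between_a_random_vector_and_a_subspace}, to conclude that this distance is $\Omega(\sqrt{n})$ with overwhelming probability; the non-zero mean is handled because the mean of $\phi_m(x_i)$ lives in a one-dimensional direction (the constant-function direction), which $\Gamma_i$ controls via (R). \textbf{(5)} Take a union bound over $i \in \{1,\dots,n\}$ and $l \in \{d+1,\dots,n\}$; since each bad event has probability $O(n^{-c})$ for every $c$, the union over $O(n^2)$ events is still overwhelming.

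\textbf{Main obstacle.} The hard part will be Step~(3)--(4): establishing that the feature vector $\phi_m(x_i)$ is genuinely ``transverse'' to the subspace spanned by the other points' feature vectors \emph{at the scale of the small eigenvalue $\hat\lambda_l$}, uniformly over all $l > d$. Unlike the classical RMT setting, the entries are neither independent nor mean-zero, so one cannot directly invoke standard anti-concentration for quadratic forms; instead the argument must route through the population operator $\*K$, use Mercer truncation to reduce to a finite-dimensional (but growing) problem, and then control the discrepancy between the sample Gram structure and its population counterpart using the relative eigenvalue concentration bounds. The balance is delicate: $m$ must be large enough that the Mercer tail is negligible at the $O(n^{-1/2})$ scale, yet small enough (relative to $d$, hence to $n^{1/\alpha}$ under (P) or $\log^{1/\gamma} n$ under (E)) that the top-$m$ eigenspace is still ``below'' the $d$-th eigenvalue — this is exactly why the condition $1 \le a < b/16$ in (R) is needed, to give enough room between the eigengap lower bound and the $\Gamma_i$ upper bound for the transversality argument to close.
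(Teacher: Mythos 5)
Your skeleton is right and matches the paper's strategy: you identify the Schur-complement/resolvent identity for $\hat u_l(i)^2$ (which the paper invokes as Lemma~41 of \citet{tao2011random} and states as \eqref{eq:principal_minor_id}), the need for a distance-to-subspace concentration inequality that handles a non-zero mean (Lemma~\ref{lemma:distance_between_a_random_vector_and_a_subspace}), and the final union bound over $i$ and $l$. You also correctly locate the key fact that, after conditioning on $x_1$, the mean of the off-diagonal vector $b$ lies in the all-ones direction, which is why (R) and the quantity $\Gamma_i$ are what make Lemma~\ref{lemma:distance_between_a_random_vector_and_a_subspace} applicable.

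The genuine problem is Step~(2), the Mercer truncation, which is \emph{not} what the paper does and, taken literally, would break the argument. If you replace $k$ by a rank-$m$ feature map $\phi_m$ with $m \le d$, then $b \approx \Phi_m^{(-1)} \phi_m(x_1)$ lies (up to the tail) in the \emph{range} of the truncated minor $\tilde K_m = \Phi_m^{(-1)}\Phi_m^{(-1)\top}$, i.e.\ in the span of the \emph{top} $m$ eigenvectors of $\tilde K_m$. But the indices that give nontrivial contributions in \eqref{eq:principal_minor_id} are precisely the low ones, $j \in J \subset \{d+1,\dots,n-1\}$, for which $|\tilde\lambda_j - \hat\lambda_l| = \O(1)$; the projection of a truncated $b$ onto the span of these low eigenvectors would be essentially zero, making the denominator $O(1)$ rather than $\Omega(n)$, which is the opposite of what you need. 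A uniform sup-norm bound on the Mercer tail does not rescue this: $\|K - \Phi_m\Phi_m^\top\|_\infty \le \epsilon$ only gives an operator-norm control of order $n\epsilon$, which is far too coarse to preserve the fine structure of the small eigenvalues and their eigenvectors, which is exactly the structure the Schur identity exploits. The paper therefore works with the \emph{untruncated} $K$, uses Cauchy interlacing together with the eigenvalue sum bound \eqref{eq:sample_eval_bound} (itself obtained from the relative concentration bounds of \citet{valdivia2018relative}, not a pointwise $\hat\lambda_l \approx n\lambda_l$ comparison) to produce a set $J$ of $\Omega(n)$ indices beyond $d$ with $|\tilde\lambda_j - \hat\lambda_l| = \O(1)$, and only then applies Lemma~\ref{lemma:distance_between_a_random_vector_and_a_subspace} to $b$ and $\mathrm{span}\{\tilde u_j\}_{j\in J}$.

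Relatedly, your Steps~(3)--(4) describe a ``transversality'' of $\phi_m(x_i)$ to the span of the other points' feature vectors; this picture is at best vague and at worst ill-posed once $m$ is smaller than $n-1$ (in which case $\phi_m(x_i)$ has no component orthogonal to $\mathrm{span}\{\phi_m(x_{i'})\}_{i'\ne i}$). The actual hard part in the paper is verifying the hypothesis of Lemma~\ref{lemma:distance_between_a_random_vector_and_a_subspace}, namely $\|\pi_J(\bar b)\| \le 2|J|^{1/4}$; this is where the eigengap ($\Delta_i$) and residual-mass ($\Gamma_i$) parts of (R) enter, via a Davis--Kahan argument in RKHS (Section~\ref{sec:proof_of_claim}), and where the room between $a$ and $b/16$ is consumed. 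Your intuition about where (R) is needed is roughly right in spirit, but the mechanism is ``project the all-ones vector onto the low empirical eigenvectors and show the overlap is small'' rather than ``Mercer-truncate and argue transversality of feature vectors.''
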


When a unit eigenvector satisfies \eqref{eq:delocalization_bound} (up to log factors), it is said to be \emph{completely delocalised}. There is a now expansive literature in the field of Random Matrix Theory proving the delocalisation of the eigenvectors of certain mean-zero random matrices with independent entries \citep{erdHos2009semicircle, erdHos2009local, tao2011random, rudelson2015delocalization, vu2015random}. Theorem~\ref{thm:delocalisation} may be of independent interest since to our knowledge, it is the first eigenvector delocalisation result for a random matrix with non-zero mean and dependent entries.

To prove Theorem~\ref{thm:delocalisation}, we take inspiration from a proof strategy employed in \citet{tao2011random} (see also \citet{erdHos2009semicircle}) which makes use of an identity relating the eigenvalues and eigenvectors of a matrix with that of its principal minor. The non-zero mean, and dependence between the entries of a kernel matrix present new challenges which require novel technical insights and tools and make up the bulk of our technical contribution.

\begin{proof}[Proof of Theorem~\ref{thm:delocalisation}]

By symmetry and a union bound, to prove Theorem~\ref{thm:delocalisation}, it suffices to establish the bound for the first coordinate of $\hat{u}_l$ for some an arbitrary index $d < l \leq n$. We shall let $\tilde{K}$ denote the bottom right principal minor of $K$
, that is the $n -1 \times n - 1$ matrix such that
\begin{equation*}
    K = \begin{pmatrix}
        z & y^\top \\
        y & \tilde{K}
    \end{pmatrix}
\end{equation*}
where
$z = k(x_1,x_1)$ and $y = \br{k(x_1, x_2), \ldots, k(x_1, x_n)}^\top$. 
We will denote the ordered eigenvalues and corresponding eigenvectors of $\tilde{K}$ by $\scbr{\tilde{\lambda}_l}_{l=1}^{n-1}$ and $\scbr{\tilde{u}_l}_{l=1}^{n-1}$ respectively. By Lemma~41 of \citet{tao2011random}, we have the following remarkable identity:
\begin{equation}
    \label{eq:principal_minor_id}
    \hat{u}_l(1)^2 = \frac{1}{1 + \sum_{j=1}^{n-1}\bigbr{\tilde{\lambda}_j - \hat{\lambda}_i}^{-2}\bigbr{\tilde{u}_j^\top y}^2}.
\end{equation}
In addition, Cauchy's interlacing theorem tells us that the eigenvalues of $K$ and $\tilde K$ interlace, i.e. $\hat{\lambda}_i \leq \tilde{\lambda}_i \leq \hat{\lambda}_{i+1}$ for all $1 \leq i \leq n-1$. By \eqref{eq:sample_eval_bound} we have that $|\hat{\lambda}_i| = \O(1)$ for all $d +1 \leq i \leq n$  with overwhelming probability and so
by Cauchy's interlacing theorem,
we can find a set of indices $J \subset \cbr{d+1,\ldots,n-1}$ with $\abs{J} \geq (n - d) / 2$ such that $\sabs{\tilde{\lambda}_j - \hat{\lambda}_i} = \O\br{1}$ for all $j \in J$. Combining this observation with \eqref{eq:principal_minor_id}, 
we have that
\begin{equation}
    \label{eq:first_lower_bound}
    \sum_{j=1}^{n-1}\bigbr{\tilde{\lambda}_j - \hat{\lambda}_i}^{-2}\bigbr{\tilde{u}_j^\top y}^2 \geq \sum_{j\in J}\bigbr{\tilde{\lambda}_j - \hat{\lambda}_i}^{-2}\bigbr{\tilde{u}_j^\top y}^2 \geqc \norm{\pi_{J}(y)}^2.
\end{equation}
where $\pi_{J}$ denotes the orthogonal projection onto the subspace spanned by $\cbr{\tilde{u}_j}_{j\in J}$. So, to establish \eqref{eq:delocalization_bound}, it suffices to show that
\begin{equation}
    \label{eq:projection_bound}
    \norm{\pi_{J}(y)}^2 = \Omega(n)
\end{equation}
with overwhelming probability. We condition on $x_1$, so that $y$ is a vector of independent random variables and denote its conditional mean by $\bar y$, which is a constant vector whose entries are less than one. In addition, each entry of $y$ has common conditional variance which we denote by $\sigma^2 = \E_{x\sim F}\{k^2(x_1, x)\}$.

To obtain the lower bound \eqref{eq:projection_bound}, we prove a novel concentration inequality for the distance between a random vector and a subspace, which may be of independent interest. Our lemma generalises a similar result in \citet[Lemma~43]{tao2011random} which holds only for random vectors with zero mean and unit variance. The proof is provided in Section~\ref{sec:distance_between_a_random_vector_and_a_subspace} of the appendix.

\begin{lemma}
    \label{lemma:distance_between_a_random_vector_and_a_subspace}
    Let $y \in \R^n$ be a random vector with mean $\bar y := \E y$ whose entries are independent, have common variance $\sigma^2$ and are bounded in $[0,1]$ almost surely. Let $H$ be a subspace of dimension $q \geq 64 / \sigma^2$ and $\pi_H$ the orthogonal projection onto $H$. If $H$ is such that $\norm{\pi_H(\bar y)} \leq 2 (\sigma^2 q)^{1/4}$, then 
    for any $t\geq 8$
    \begin{equation*}
        \P \br{\abs{\norm{\pi_H\br{y}} - \sigma q^{1/2}} \geq t} \leq 4\exp \br{-t^2 / 32}.
    \end{equation*}
    In particular, one has
    \begin{equation}
        \norm{\pi_H\br{y}} = \sigma q^{1/2} + \O\br{\log^{1/2}(n)}
    \end{equation}
    with overwhelming probability.
\end{lemma}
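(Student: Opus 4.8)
The plan is to decompose $\norm{\pi_H(a)}$ around its "expected size" $q^{1/2}$ in two stages: first concentrate $\norm{\pi_H(a)}$ around the deterministic quantity $\sqrt{\E \norm{\pi_H(a)}^2}$ using a Lipschitz / bounded-differences argument, and then show that this deterministic quantity is itself within $\O(1)$ of $q^{1/2}$. For the first stage, note that the map $a \mapsto \norm{\pi_H(a)}$ is $1$-Lipschitz with respect to the Euclidean norm (since $\pi_H$ is an orthogonal projection), and the coordinates of $a$ are independent and bounded in $[0,1]$. A standard bounded-differences/McDiarmid estimate, or the convex-Lipschitz concentration inequality for independent bounded variables (Talagrand), then gives that $\norm{\pi_H(a)}$ concentrates around its mean (or median) at scale $\O(1)$: deviations of size $t$ have probability at most $2\exp(-ct^2)$. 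I would use this to control $\abs{\norm{\pi_H(a)} - \E\norm{\pi_H(a)}}$.

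For the second stage I need $\E \norm{\pi_H(a)}$ to be close to $q^{1/2}$. Write $a = \bar a + z$ where $z$ has independent, mean-zero, bounded entries, and $\pi_H(a) = \pi_H(\bar a) + \pi_H(z)$. Then $\E\norm{\pi_H(a)}^2 = \norm{\pi_H(\bar a)}^2 + \E\norm{\pi_H(z)}^2$. The cross term vanishes in expectation because $z$ is mean-zero and independent of the (deterministic) subspace $H$. For the noise term, $\E \norm{\pi_H(z)}^2 = \sum_{i} \mathrm{Var}(z_i) \, (\pi_H)_{ii} \le \sum_i (\pi_H)_{ii} = \operatorname{tr}(\pi_H) = q$, using that the entries are in $[0,1]$ so each variance is at most $1/4 \le 1$; one can also lower bound it, but an upper bound of $q$ together with the hypothesis $\norm{\pi_H(\bar a)} \le 2q^{1/4}$ already gives $\E\norm{\pi_H(a)}^2 \le q + 4q^{1/2}$, hence $\sqrt{\E\norm{\pi_H(a)}^2} \le q^{1/2} + 2 + o(1)$ using $\sqrt{q + 4\sqrt q} \le \sqrt q + 2$ for $q \ge 4$. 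Combined with Jensen ($\E\norm{\pi_H(a)} \le \sqrt{\E\norm{\pi_H(a)}^2}$) and a matching lower bound via the first-stage concentration (which controls the gap between $\E\norm{\pi_H(a)}$ and $\sqrt{\E\norm{\pi_H(a)}^2}$ by $\O(1)$), this pins $\E\norm{\pi_H(a)}$ to within an additive constant of $q^{1/2}$. Feeding this back into the first-stage bound, adjusting constants so that the constant slack is absorbed for $t \ge 8$, yields $\P(\abs{\norm{\pi_H(a)} - q^{1/2}} \ge t) \le 4\exp(-t^2/32)$.

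The main obstacle I expect is bookkeeping the constants carefully enough to land exactly the stated bound (the factor $32$, the threshold $t \ge 8$, the hypothesis constant $2q^{1/4}$, and the requirement $q \ge 4$), since the two-stage argument introduces several $\O(1)$ slacks — from Jensen, from the variance bound $1/4$ versus $1$, from $\sqrt{q + 4\sqrt q} - \sqrt q$, and from converting mean-concentration to the stated centring at $q^{1/2}$. The cleanest route is probably to first prove the weaker statement $\abs{\norm{\pi_H(a)} - \sqrt{\E\norm{\pi_H(a)}^2}} \le t$ with probability $\ge 1 - 2\exp(-t^2/16)$ via bounded differences (each coordinate change moves $\norm{\pi_H(a)}$ by at most $1$, giving sub-Gaussian constant proportional to $\sqrt{n}$ naively — so I would instead invoke Talagrand's convex-Lipschitz inequality to get the dimension-free rate), then absorb the deterministic gap $\sqrt{\E\norm{\pi_H(a)}^2} - q^{1/2} \in [-1, 3]$ into the threshold, losing a factor of $2$ in the exponent and picking up the $t \ge 8$ requirement. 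The "in particular" statement then follows immediately by taking $t = c\log^{1/2}(n)$ for a large enough constant $c$ so that $4\exp(-t^2/32) = \O(n^{-c'})$ for every $c'$.
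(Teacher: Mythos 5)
Your two-stage strategy matches the paper's at a high level: Talagrand's convex-Lipschitz concentration inequality for independent bounded coordinates gives dimension-free fluctuation control for $\norm{\pi_H(a)}$, then the centering quantity is shown to lie within $\O(1)$ of $\sqrt q$, and the constant slack is absorbed for $t \geq 8$ at the cost of degrading the exponent from $t^2/16$ to $t^2/32$. The paper centers on the \emph{median} and pins it with a pair of Markov second-moment estimates, one per tail, applied to $\norm{\pi_H(\xi)}^2 - q$ where $\xi := a - \bar a$; you center on the \emph{mean} and try to pin it via $\E\norm{\pi_H(a)}^2$ and Jensen. By Talagrand those two centres differ by $\O(1)$, so that choice is cosmetic; the substance lies in how you pin the chosen centre near $\sqrt q$.

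The genuine gap is the lower bound on the centre. Your decomposition yields $\E\norm{\pi_H(a)}^2 \leq q + 4\sqrt q$ and hence $\E\norm{\pi_H(a)} \leq \sqrt q + 2$, but you never establish $\E\norm{\pi_H(a)} \geq \sqrt q - \O(1)$, and the step you gesture at (``a matching lower bound via the first-stage concentration'') does not supply it. Talagrand controls the \emph{fluctuation} of $\norm{\pi_H(a)}$, giving $\sqrt{\E\norm{\pi_H(a)}^2} - \E\norm{\pi_H(a)} = \O(1)$; that relates your two centering candidates to each other, but relates neither of them to $\sqrt q$ from below. For a lower bound you would need $\E\norm{\pi_H(\xi)}^2 = \sum_i p_{ii}\, \E\xi_i^2$ (with $p_{ii}$ the diagonal entries of the projection matrix) to be of order $q$, and under the stated hypotheses the variances $\E\xi_i^2$ are only bounded \emph{above} (by $1/4$, via Popoviciu), not below, so nothing in your decomposition forces this. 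The missing ingredient is exactly the lower-tail half of the paper's median estimate, namely $\P\bigl(\norm{\pi_H(a)} \leq \sqrt q - 4\bigr) < 1/2$, which the paper argues by Markov's inequality applied separately to the diagonal and off-diagonal parts of $\norm{\pi_H(\xi)}^2 - q$; that step is a dedicated argument and does not fall out of Talagrand plus your upper bound.
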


Returning to the main thread, we claim for the moment that $\smallnorm{\pi_{J}(\bar y)} \leq 2 |J|^{1/4}$. Then, by Lemma~\ref{lemma:distance_between_a_random_vector_and_a_subspace}
we have that, conditional on $x_1$,
\begin{equation*}
    \norm{\pi_{J}(y)}^2 \geqc \abs{J} \geq (n - d)/2 = \Omega(n)
\end{equation*}
with overwhelming probability. This holds for all $x_1 \in \*X$ and therefore establishes \eqref{eq:projection_bound}. To complete the proof, then, it remains to prove our claim, and it is here where we require the regularity hypothesis (R). The proof of the claim is quite involved, so we defer the details to Section~\ref{sec:proof_of_claim} of the appendix, given which, the proof of Theorem~\ref{thm:delocalisation} is complete.

\end{proof}

\section{Experiments}
\label{sec:experiments}

\subsection{Datasets and setup}

To see how our theory translates into practice, we examine the maximum entrywise error of the low-rank approximations of kernel matrices derived from a synthetic dataset and a collection of five real-world data sets, which are summarised in 
the following table. 

\begin{table}[h!]
    \centering
    \begin{tabular}{@{}llcc@{}}
        \toprule
        Dataset & Description & \# Instances & \# Dimensions \\
        \midrule
        GMM & simulated data from a Gaussian mixture model & 1,000 & 10 \\
        Abalone & physical measurements of Abalone trees & 4,177 & 7 \\
        Wine Quality & physicochemical measurements of wines & 6,497 & 11 \\
        MNIST & handwritten digits & 10,000 & 784 \\
        20 Newsgroups & tf-idf vectors from newsgroup messages & 11,314 & 21,108 \\
        Zebrafish & gene expression in zebrafish embryo cells & 6,346 & 5,434 \\
        \bottomrule
    \end{tabular}
    \label{table:datasets}
\end{table}
Additional details about the dataset are provided in Section~\ref{sec:experiments_extra} of the appendix.

For the purpose of our experiment, we employ kernels in the class of \emph{Mat\'ern kernels}, of the form
\begin{equation*}
k_{\nu}(x, y) = \frac{2^{1-\nu}}{\Gamma(\nu)} \left(\sqrt{2\nu} \frac{\|x - y\|}{\omega}\right)^\nu K_\nu\left(\sqrt{2\nu} \frac{\|x - y\|}{\omega}\right)
\end{equation*}
where $\Gamma$ denotes the gamma function, and $K_\nu$ is the modified Bessel function of the second kind. 
The class of Mat\'ern kernels is a generalisation of the radial basis kernel, with an additional parameter $\nu$ which governs the smoothness of the resulting kernel. When $\nu = 1/2$, we obtain the non-differentiable exponential kernel, and in the $\nu \to \infty$ limit, we obtain the infinitely-differentiable radial basis kernel. For the intermediate values $\nu = 3/2$ and $\nu = 5/2$, we obtain, respectively, once and twice-differentiable functions.

The optimal choice of the bandwidth parameter is problem-dependent, and in supervised settings is typically chosen using cross-validation.
In unsupervised settings, it is necessary to rely on heuristics, and for this experiment, we use the popular \emph{median heuristic} \citep{flaxman2016bayesian, mooij2016distinguishing, mu2016kernel,garreau2017large}, which has been shown to perform well in practice.

For each dataset, we construct four kernel matrices using Mat\'ern kernels with smoothness parameters $\nu = \tfrac{1}{2}, \tfrac{3}{2}, \tfrac{5}{2}, \infty$,
each time selecting the bandwidth using the median heuristic. 
For each kernel, we compute the best rank-$d$ low-rank approximation of the kernel matrix using the \texttt{svds} function in the SciPy library for Python \citep{scipy}. We do this for a range of ranks $d$ from $1$ to $n$, where $n$ is the number of instances in the dataset, and record the entrywise errors.

\begin{figure}[t]
    \centering
    \includegraphics[width=\textwidth]{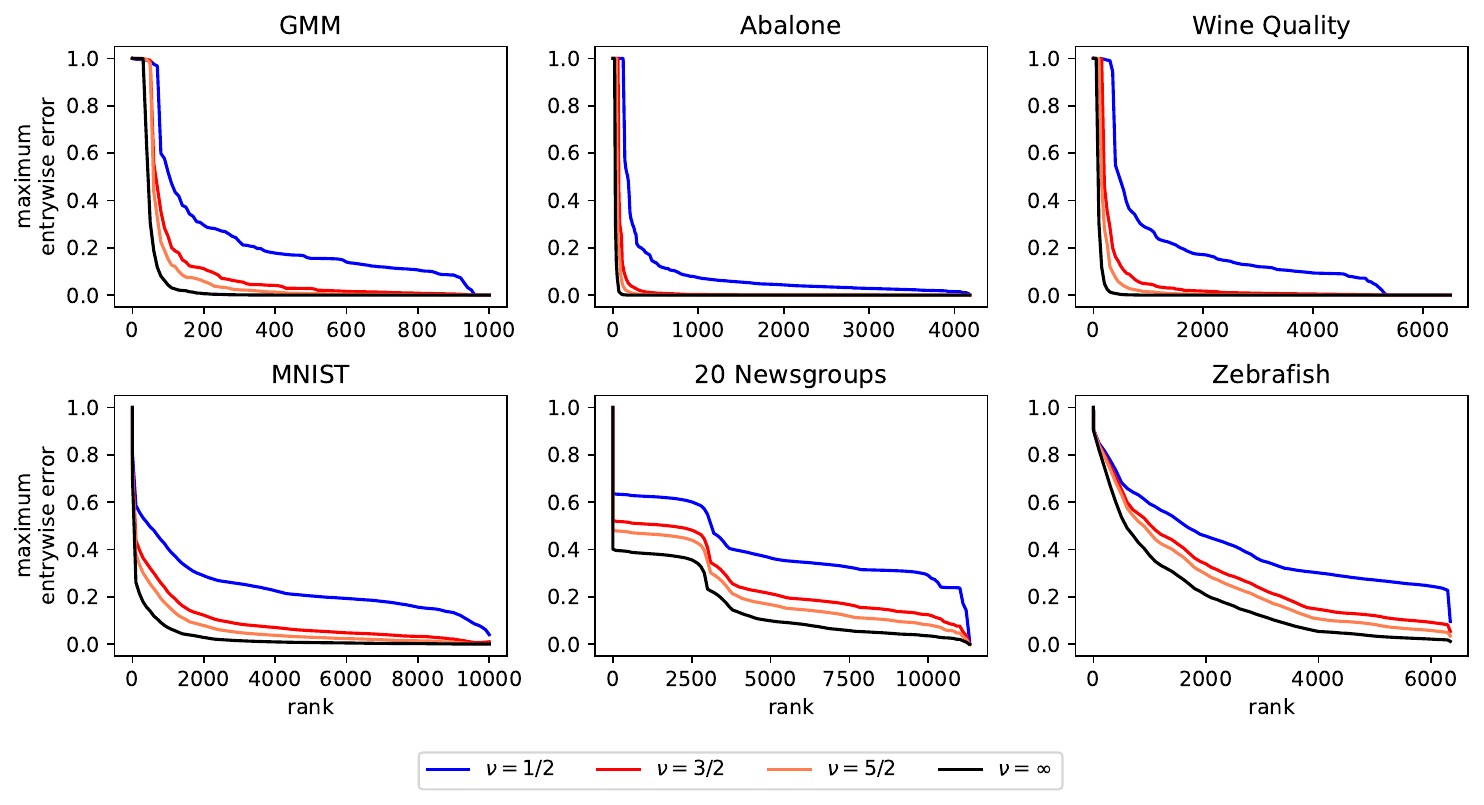}
    \caption{The maximum entrywise error against rank for low-rank approximations of kernel matrices constructed from a collection of datasets. The kernel matrices are constructed using Mat\'ern kernels with a range of smoothness parameters, each of which is represented by a line in each plot. Details of the experiment are provided in Section~\ref{sec:experiments}.}
    \label{fig:experiments}
\end{figure}

\subsection{Interpretation of the results}

Figure~\ref{fig:experiments} shows the maximum entrywise errors for each dataset and kernel. For comparison, the Frobenius norm errors are plotted in Figure~\ref{fig:frob_error_experiments} in Section~\ref{sec:experiments_extra} of the appendix.

As predicted by our theory, for the four ``low-dimensional'' datasets, \emph{GMM}, \emph{Abalone}, \emph{Wine Quality} and \emph{MNIST}, the maximum entrywise decays rapidly as we increase the rank of the approximation, with the exception of the highly non-smooth $v = \tfrac{1}{2}$ kernel, for which the maximum entrywise error decays much more slowly. In addition, the decay rates of the maximum entrywise error are in order of the smoothness of the kernels.

For the ``high-dimensional'' datasets, \emph{20 Newsgroups} and \emph{Zebrafish}, a different story emerges. Even for the smooth radial basis kernel ($\nu = \infty$), the maximum entrywise error decays very slowly. This would suggests that our theory does potentially \emph{not} carry over to the high-dimensional setting, and that caution should be taken when employing low-rank approximations for such data. Interestingly, the \emph{20 Newsgroups} dataset exhibits a sharp drop in maximum entrywise error between $d = 2500$ and $d = 3000$ which \emph{is not} seen in the decay of the Frobenius norm error (Figure~\ref{fig:frob_error_experiments} in Section~\ref{sec:experiments_extra}).

Code to reproduce the experiments in this section can be found at \\\texttt{https://gist.github.com/alexandermodell/b16b0b29b6d0a340a23dab79219133f2}.

\section{Limitations and open problems}

To conclude, we discuss some of the limitations of our theory, as well as some of the open problems. 

\subsection{Limitations of our theory}
\label{sec:limitations}

\noindent \emph{Positive semi-definite kernels. } One significant limitation of our theory is the assumption that the kernel is positive semi-definite and continuous. This condition is known as Mercer's condition in the literature and ensures that the spectral decomposition of the kernel \eqref{eq:kernel_spectral_decomp} converges uniformly, however we don't actually require such a strong notion of convergence for our theory. \citet[Lemma~22]{valdivia2018relative} show that the decomposition converges \emph{almost surely} 
under a much weaker condition which is implied by our hypotheses (P) and (E). The only other places we need this assumption is to make use of results in \citet{rosasco2010learning} and \citet{tang2013universally}. These results make heavy use of reproducing kernel Hilbert space technology though it seems plausible that they could be generalised to the indefinite setting using the framework of Krein spaces \citep{ong2004learning, lei2021network}.

\noindent \emph{Low-dimensional setting. } In our asymptotics, we explicitly assume that the dimension of the input space remains fixed as the number of sample increases, which places us in the so-called low-dimensional setting. We do not consider the high-dimensional setting, however our empirical experiments suggest that our conclusions may not carry over.

\noindent \emph{Verification of the assumptions. } While there is a established literature studying the eigenvalue decay of kernels under general probability measures \citep{kuhn1987eigenvalues, cobos1990eigenvalues, ferreira2013eigenvalue, belkin2018approximation, li2024eigenvalue}, except in very specialised settings (such as Propositions~\ref{ex:rbf} and \ref{prop:dot_prod_sphere}), control of the eigenfunctions is typically out of reach. This makes verifying the assumptions of our theory under general probability distributions quite challenging. This is a widespread limitation of many theoretical analyses in the kernel literature, and for an extended discussion, we refer the reader to \citet{barzilai2023generalization}.

\subsection{Open problems}

\noindent \emph{Randomised low-rank approximations.} While the truncated spectral decomposition provides the ``ideal'' low-rank approximation, it requires computing the whole kernel matrix which can be prohibitive for very large datasets. Randomised low-rank approximations, such as the \emph{randomised SVD} \citep{halko2011finding}, the \emph{Nystr\"{o}m} method \citep{williams2000using, drineas2005nystrom} and \emph{random Fourier features} \citep{rahimi2007random, rahimi2008weighted}, have emerged as efficient alternatives, and there is an extensive body of literature examining their statistical performance \citep{drineas2005nystrom, rahimi2007random, belabbas2009spectral, boutsidis2009improved, kumar2009ensemble, kumar2009sampling, gittens2011spectral, gittens2013revisiting, altschuler2016greedy, derezinski2020improved}. However, their primary focus is on classical error metrics such as the spectral and Frobenius norm errors and an entrywise analysis would presumably provide greater insights into these approximations, particularly given recently observed multiple-descent phenomena \citep{derezinski2020improved}.

\noindent \emph{Lower bounds.} At present, it is unclear whether the bounds we obtain are tight, or indeed whether the truncated spectral decomposition itself is optimal with respect the the entrywise error. An interesting direction for future research would be to investigate lower bounds to understand the fundamental limits of this problem.

\section*{Acknowledgements}
The author thanks Nick Whiteley, Yanbo Tang and Mahmoud Khabou for helpful discussions and Annie Gray for providing code to preprocess the \emph{20 Newsgroups} and \emph{Zebrafish} datasets.

This work was supported by the Engineering and Physical Sciences Research Council [grant EP/X002195/1].

\bibliographystyle{plainnat}
\bibliography{uniform_error_bounds}

\newpage

\appendix

{\LARGE \bf Appendix}

\section{Proof of Propositions~\ref{ex:rbf} and \ref{prop:dot_prod_sphere}}
\label{sec:example_proofs}

For notational simplicity, in this section we will assume in this section that the eigenvalues and eigenfunctions are indexed from 0 rather than 1.

\subsection{Proof of Proposition~\ref{ex:rbf}}
\label{sec:proof_rbf}

We will begin by reducing the problem of verifying our assumptions under $\rho$ to verifying them under the probability measure associated with the univariate Gaussian distribution $\*N(0,\sigma^2)$, which we will denote by $\mu$.

Let $\underline{\*K}: L_\rho^2 \to L_\rho^2$ denote the integral operator associated with the kernel $k$ and the measure $\rho$, and let $\scbr{\underline{\lambda}_i}$ denote its eigenvalues, arranged in descending order, and $\scbr{\underline{u}_i}$ denote their corresponding eigenfunctions. By the rotation invariance of both $k$ and $\rho$, the operator $\underline{\*K}$ may be written as the $p$-fold tensor product
\begin{equation*}
    \underline{\*K} = \*K \otimes \cdots \otimes \*K
\end{equation*}
where $\*K: L_\mu^2 \to L_\mu^2$ denotes the integral operator associated with the kernel $k$ and the univariate Gaussian measure $\mu$. Let $\scbr{\lambda_i}$ denote its eigenvalues, arranged in descending order, and $\scbr{{u}_i}$ denote their corresponding eigenfunctions. Then, the eigenvalues and eigenfunctions of $\underline{\*K}$ and $\*K$ are related in the following way (see \citet{shi2008data} or \citet{fasshauer2012stable}). For every $i$, there exists $i_1,\ldots,i_p$ satisfying $\sum_{j=1}^p i_j = i$ such that
\begin{equation}
\label{eq:eval_efunc_prod}
    \underline{\lambda}_i = \prod_{j=1}^p \lambda_{i_j} \qquad \text{ and } \qquad \underline{u}_i(x) = \prod_{j=1}^p u_{i_j}(x^j)
\end{equation}
for all $x = (x^1,\ldots,x^p)^\top \in \R^p$. 
Now suppose that $\lambda_i = \Theta\br{e^{-\beta i}}$,
then 
\begin{equation*}
    \underline{\lambda}_i = \prod_{j=1}^p \lambda_{i_j} \eqc \prod_{j=1}^p e^{-\beta i_j} = e^{-\beta \sum_{j} i_j} = e^{-\beta i},
\end{equation*}
and suppose that $\smallnorm{u_i}_\infty = \O(e^{s i})$ for some $s < \beta / 2$. 
Then
\begin{equation*}
    \norm{\underline{u}_i}_\infty \leq \prod_{j=1}^p \norm{u_{i_j}}_\infty \leqc \prod_{j=1}^p e^{s i_j} = e^{s i}
\end{equation*}

Therefore to prove that (E) hold under $\rho$, it suffices to show that it holds under $\mu$.

\citet{shi2008data} provide an explicit formula for the eigenvalues and eigenfunctions of $\*K$, which is a refinement of an earlier result of \citet{zhu1997gaussian}.

Let $\upsilon := 2\sigma^2 / \omega^2$ and let $H_i(x)$ be the $i$th order Hermite polynomial.
Then the eigenvalues and eigenfunctions of $\*K$ are given by
\begin{equation*}
    \lambda_i = \sqrt{\frac{2}{1 + \upsilon + \sqrt{1 + 2\upsilon}}} \br{\frac{\upsilon}{1 + \upsilon + \sqrt{1 + 2\upsilon}}}^i
\end{equation*}
\begin{equation*}
    u_i(x) = \frac{\br{1 + 2\beta}^{1/8}}{\sqrt{2^i i!}} \exp\br{-\frac{x^2}{2\sigma^2}\frac{\sqrt{1+ 2\beta} - 1}{2}} H_i\br{\br{\frac{1}{4} + \frac{\beta}{2}}^{1/4} \frac{x}{\sigma}}.
\end{equation*}
Therefore, we have $\lambda_i = C_1e^{-\beta i}$ where 
\begin{equation*}
    \beta = \log\br{\frac{1 + \upsilon + \sqrt{1 + 2\upsilon}}{\upsilon}}.
\end{equation*}
We will now show that each $u_i$ is uniformly bounded. By a change of variables, we can write $u_i$ as
\begin{equation*}
    u_i(x) = \frac{C_2}{\sqrt{2^i i!}}e^{-y^2} H_i(y)
\end{equation*}
for some $y \in \R$.
On the other hand, we have the following inequality due to \citet{indritz1961inequality}. For all $x \in \R$,
\begin{equation*}
    e^{-x^2}H_i(x) \leq 1.09\sqrt{2^i i!}.
\end{equation*}
Therefore $u_i(x) \leq 1.09 C_2$ for all $x$. We can use the fact that $H_i(x)$ is either odd or even to obtain an analogous lower bound. Therefore $\smallnorm{u_i}_\infty \leq 1.09 C_2$ for all $i$, so $s = 0$ and (E) holds.

We will now show that $\Gamma_i = 0$ for all $i \geq 1$ so that (R) holds with $b = +\infty$ and there is no requirement on the eigengaps $\Delta_i$.

Expanding $\int u_i(x) \d\mu(x)$, collecting exponential terms and applying a change-of-variables, one can calculate that
\begin{equation*}
    \int u_i(x) \d \mu(x) = C_3 \int_{-\infty}^{+\infty} e^{-y^2}H_i(y) \d y.
\end{equation*}
It is a standard result that $e^{-y^2} H_i(y) = 0$ as long as $i\neq 0$ \citep{gradshteyn2014table}, and therefore $\abs{\int u_i(x) \d \mu(x)} = 0$ for all $i \geq 1$, and by \eqref{eq:eval_efunc_prod}, we have that $\Gamma_i = 0$ for all $i \geq 1$.

\subsection{Proof of Proposition~\ref{prop:dot_prod_sphere}}
\label{sec:proof_dot_prod_sphere}

For any $p \geq 3$, dot-product kernels with respect to the uniform measure on the sphere exhibit the spectral decomposition
\begin{equation*}
    k(x,y) = \sum_{l=0}^\infty \lambda_l^* \sum_{m=1}^{N_l} u^*_{l,m}(x)u^*_{l,m}(y)
\end{equation*}
where the eigenfunctions $\scbr{u_{l,m}^*}$ are the $m$th spherical harmonic of degree $l$, $N_l = \frac{2l+p-2}{l}{{l+p-3} \choose {p-2}} = \O(l^{p-2})$ is the number of harmonics of each degree, and $\scbr{\lambda_l^*}$ are the distinct eigenvalues \citep{smola2000regularization}. 

The first spherical harmonic is a constant function, and therefore by the orthogonality of the eigenfunctions in $L^2_\rho$, $\int u_{l,m}^*(x) \d \rho(x) = 0$ for all $l \geq 1$, and therefore $\Gamma_i = 0$ for all $i \geq 1$. Therefore (R) holds with $s = +\infty$, and there are no requirements on the eigengaps.

In addition, Lemma~3 of \citet{minh2006mercer} shows that the supremum-norm of a spherical harmonic is upper bounded by 
\begin{equation}
\label{eq:inf_norm_spherical_harmonic}
    \smallnorm{u_{l,m}^*}_{\infty} \leq \sqrt{\frac{N_l}{\abs{\S^{p-1}}}} = \O\br{i^{\frac{p-2}{2}}}.
\end{equation}

The eigenvalue decay rates are obtained from Propositions~2.3 and 2.4 of \citet{scetbon2021spectral}, and given \eqref{eq:inf_norm_spherical_harmonic}, the condition $a > (p^2 - 4p + 5)/2$ ensures that the conditions for (P) are met.

\section{Proof of Equation~\eqref{eq:sample_eval_bound}}
\label{sec:sample_eval_bound}

We begin this section with two upper bounds on polynomial and exponential series, which we prove in Section~\ref{sec:proof_series_upper_bounds}, and which we will use throughout this proof.

\begin{lemma}
\label{lemma:series_upper_bounds}
    Let $\alpha > 1$, $\beta > 0$ and $0 < \gamma \leq 1$ for fixed constants. Then the following upper bounds hold:
    \begin{equation*}
        \sum_{i=d+1}^\infty i^{-\alpha} = \O\br{d^{-\alpha + 1}},\qquad 
        \sum_{i=d+1}^\infty e^{-\beta i^\gamma} = \O\br{e^{-\beta d^\gamma}d^{1-\gamma}}.
    \end{equation*}
\end{lemma}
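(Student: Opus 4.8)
The plan is to prove both bounds by comparing the series to integrals, using that $i^{-\alpha}$ and $e^{-\beta i^\gamma}$ are decreasing in $i$ (for $i \geq 1$, say), so that each term is bounded by the integral of the function over the preceding unit interval: $\sum_{i=d+1}^\infty g(i) \leq \int_d^\infty g(t)\,dt$ for decreasing $g$.

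For the polynomial bound, I would write
\begin{equation*}
    \sum_{i=d+1}^\infty i^{-\alpha} \leq \int_d^\infty t^{-\alpha}\,dt = \frac{d^{-\alpha+1}}{\alpha - 1},
\end{equation*}
which is $\O(d^{-\alpha+1})$ since $\alpha > 1$ is a fixed constant (so $1/(\alpha-1)$ is absorbed into the hidden constant). For the exponential bound, I would similarly bound
\begin{equation*}
    \sum_{i=d+1}^\infty e^{-\beta i^\gamma} \leq \int_d^\infty e^{-\beta t^\gamma}\,dt,
\end{equation*}
and then estimate this integral. The substitution $u = t^\gamma$, i.e. $t = u^{1/\gamma}$ and $dt = \tfrac{1}{\gamma} u^{1/\gamma - 1}\,du$, gives $\int_{d^\gamma}^\infty \tfrac{1}{\gamma} u^{1/\gamma - 1} e^{-\beta u}\,du$. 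Since $1/\gamma - 1 \geq 0$, the factor $u^{1/\gamma-1}$ is increasing but is dominated by the exponential; the cleanest route is to note that $t^\gamma \geq d^{\gamma-1} t$ for $t \geq d$ when $\gamma \leq 1$ (because $t \mapsto t^{\gamma-1}$ is decreasing), hence $e^{-\beta t^\gamma} \leq e^{-\beta d^{\gamma-1} t}$ and
\begin{equation*}
    \int_d^\infty e^{-\beta t^\gamma}\,dt \leq \int_d^\infty e^{-\beta d^{\gamma-1}t}\,dt = \frac{e^{-\beta d^{\gamma-1}\cdot d}}{\beta d^{\gamma-1}} = \frac{e^{-\beta d^{\gamma}}\, d^{1-\gamma}}{\beta},
\end{equation*}
which is exactly $\O(e^{-\beta d^\gamma} d^{1-\gamma})$. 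Alternatively one can integrate by parts or just bound the tail integral directly; any such manipulation is routine.

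The main (minor) obstacle is simply choosing the cleanest way to handle the exponential integral so that the constant $d^{1-\gamma}$ pops out with the right exponent; the inequality $t^\gamma \geq d^{\gamma-1}t$ on $[d,\infty)$ is the key observation that makes this painless. A small technical point to check is that the integral comparison requires the summand to be decreasing, which holds for all $i \geq 1$ in both cases since $\alpha, \beta, \gamma > 0$; and one should note $d \to \infty$ (implicitly $d$ is large, e.g. $d = \Omega(n^{1/\alpha})$ or $d > \log^{1/\gamma}(n^{1/\beta})$ in the application), so the asymptotic notation is with respect to $d \to \infty$. No heavy machinery is needed here; this is a warm-up lemma.
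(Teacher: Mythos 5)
Your polynomial bound is fine and is essentially identical to the paper's. The exponential bound, however, has a sign error in the key inequality: you claim $t^\gamma \geq d^{\gamma-1}t$ for $t \geq d$ when $\gamma \leq 1$, but the inequality in fact goes the other way. Writing $t^\gamma/(d^{\gamma-1}t) = (t/d)^{\gamma-1}$, since $t/d \geq 1$ and $\gamma-1 \leq 0$ we get $(t/d)^{\gamma-1} \leq 1$, i.e.\ $t^\gamma \leq d^{\gamma-1}t$, with equality at $t=d$. (Geometrically, $t\mapsto t^\gamma$ is concave, so for $t\geq d$ it lies \emph{below} the line through the origin and $(d,d^\gamma)$, which is exactly $y=d^{\gamma-1}t$.) Consequently $e^{-\beta t^\gamma} \geq e^{-\beta d^{\gamma-1}t}$, which is a lower bound on the integrand rather than the upper bound you need, and your chain of inequalities breaks. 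No linear-in-$t$ lower bound on $t^\gamma$ with positive slope can hold on all of $[d,\infty)$ when $\gamma<1$, since $t^\gamma=o(t)$; so a pointwise comparison of this type cannot succeed.

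The paper instead substitutes $u=\beta t^\gamma$ in $\int_d^\infty e^{-\beta t^\gamma}\,dt$, obtaining a constant times the upper incomplete gamma function $\Gamma(1/\gamma,\beta d^\gamma)$, and then applies the tail estimate $\Gamma(s,x)=\O(x^{s-1}e^{-x})$ as $x\to\infty$ (for fixed $s>0$), which produces exactly $\O(e^{-\beta d^\gamma}d^{1-\gamma})$. Your fallback remark about integrating by parts can also be made rigorous by repeatedly integrating $\int_{d^\gamma}^\infty u^{1/\gamma-1}e^{-\beta u}\,du$ by parts and observing that each successive term is of lower order once $d$ is large, but the ``clean route'' you proposed must be abandoned.
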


Throughout this proof, $\epsilon > 0$ will denote some constant which may change from line to line, and even within lines.

To show equation~\eqref{eq:sample_eval_bound}, we first note that under (P) with $d = \Omega(n^{1/\alpha})$
\begin{equation*}
    \sum_{l=d+1}^n \lambda_l \leqc \sum_{i=d+1}^n i^{-\alpha} \leqc d^{-\alpha + 1} = \O\br{n^{\frac{-\alpha - 1}{\alpha}}}
\end{equation*}
where we have used Lemma~\ref{lemma:series_upper_bounds}.
In addition, under (E) with $d > \log^{1/\gamma}(n^{1/\beta})$, we have that
\begin{equation*}
    \sum_{l=d+1}^n \lambda_l \leqc \sum_{l=d+1}^n e^{-\beta l^\gamma} \leqc e^{-\beta d^\gamma}d^{1-\gamma} = n^{-(1+\epsilon)}\log^{(1-\gamma)/\gamma}\smallbrackets{n^{1/\beta}} = \O(n^{-1})
\end{equation*}
where we have again used Lemma~\ref{lemma:series_upper_bounds}. Now, by the triangle inequality we have that
\begin{equation*}
    \frac{1}{n}\sum_{l=d+1}^n \abs{\hat{\lambda}_l} \leq \sum_{l=d+1}^n \lambda_l + \sum_{l=d+1}^n \abs{\frac{\hat{\lambda}_l}{n} - \lambda_l}
\end{equation*}
and therefore we are left to show that 
\begin{equation}
\label{eq:eval_dev_bound}
    \sum_{l=d+1}^n \abs{\frac{\hat{\lambda}_l}{n} - \lambda_l} =
    \begin{cases}
        \O\br{n^{-(\alpha - 1)/\alpha} \log(n)} & \text{ under (P) with $d = \Omega\br{n^{1/\alpha}}$}; \\
        \O\br{n^{-1}} & \text{ under (E) with $d > \log^{1/\gamma}(n^{1/\beta})$}.
    \end{cases}
\end{equation}
To bound \eqref{eq:eval_dev_bound}, we employ a fine-grained concentration bound due to \citet{valdivia2018relative}. We begin with the polynomial hypothesis. The authors only consider the cases that $\alpha, r$ are natural numbers, since they draw a comparison between between these values and a Sobolev-type notion of regularity. Inspecting their proofs, they treat the cases $r = 0$ and $r \geq 1$ separately, however their proofs follow through in exactly the same way when the $r \geq 1$ case is replaced with $r > 0$, in order to cover all values of $\alpha > 2r + 1$, $r\geq0$. For the $r>0$ case, they derive the following result.

\begin{lemma}
\label{lemma:rel_concentration_bound_P}
    Suppose that the hypothesis (P) holds with $r>0$. Then, with overwhelming probability
    \begin{equation*}
        \abs{\frac{\hat{\lambda}_i}{n} - \lambda_i} \leqc B(i,n)\log(n)
    \end{equation*}
    where
    \begin{equation*}
        B(i,n) = 
        \begin{cases}
            i^{-\alpha + \frac{\alpha}{\alpha - 1}(r + \frac{1}{2})} n^{-1/2} & \text{ if } 1 \leq i \leq n^{\frac{\alpha - 1}{\alpha}\frac{1}{2r + 1}}; \\
            i^{-\alpha + 1 + \frac{\alpha - 1}{\alpha}(r + \frac{1}{2})}n^{-1/2} & \text{ if } n^{\frac{\alpha - 1}{\alpha}\frac{1}{2r + 1}} \leq i \leq n^{\frac{1}{2r}}; \\
            i^{-\alpha + r + 1}n^{-1/2} & \text{ if } n^{\frac{1}{2r}} \leq i \leq n;
        \end{cases}
    \end{equation*}
\end{lemma}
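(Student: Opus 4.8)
The statement is, up to a technical extension, the relative eigenvalue concentration inequality established by \citet{valdivia2018relative}, so the plan is to reproduce the structure of their proof and then isolate the single place where their restriction to integer $r$ can be relaxed to $r>0$. The backbone is the standard \emph{regularized} operator-concentration route. Passing to the RKHS $\mathcal{H}$ of $k$ with feature map $\phi_x = k(x,\cdot)$, the nonzero spectrum of $\tfrac{1}{n}K$ coincides with that of the empirical covariance operator $\hat{C} = \tfrac{1}{n}\sum_{i=1}^n \phi_{x_i}\otimes\phi_{x_i}$, whose mean is the population operator $C = \E[\phi_x\otimes\phi_x]$, with eigenpairs $(\lambda_j,u_j)$. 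Fixing a regularization level $\zeta>0$, a min--max argument shows that on the event $\{\,\|(C+\zeta)^{-1/2}(\hat C - C)(C+\zeta)^{-1/2}\|_{\mathrm{op}} \le t\,\}$ one has, simultaneously for every $i$, $|\hat\lambda_i/n - \lambda_i| \le t\,(\lambda_i + \zeta)$, so the whole problem reduces to a high-probability operator-norm bound on this regularized, centred average.

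The regularized operator is an average of i.i.d.\ bounded rank-one terms $(C+\zeta)^{-1/2}\phi_{x_i}\otimes\phi_{x_i}(C+\zeta)^{-1/2}$, so an operator Bernstein inequality yields, for any $c>0$ and with probability at least $1 - n^{-c}$, a bound $t = O\!\left(\log n\left(\sqrt{M(\zeta)/n} + M(\zeta)/n\right)\right)$, where the leverage term is $M(\zeta) = \sup_x \sum_j \frac{\lambda_j u_j(x)^2}{\lambda_j + \zeta}$ and the intrinsic dimension $N(\zeta) = \sum_j \frac{\lambda_j}{\lambda_j + \zeta}$ enters only inside the logarithm; since the exponent $c$ is also absorbed into that logarithm, the statement self-upgrades to ``overwhelming probability''. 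Under (P) these are estimated by splitting the sums at $m_\zeta \asymp \zeta^{-1/\alpha}$ (where $\lambda_{m_\zeta}\asymp\zeta$): $\lambda_j = O(j^{-\alpha})$ gives $N(\zeta) \asymp \zeta^{-1/\alpha}$, and $\|u_j\|_\infty = O(j^r)$ gives $M(\zeta) \le \sum_j \frac{\lambda_j\|u_j\|_\infty^2}{\lambda_j + \zeta} \asymp \zeta^{-(2r+1)/\alpha}$, the tail $\sum_{j > m_\zeta} j^{2r-\alpha}$ converging precisely because $\alpha > 2r+1$.

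Substituting into $t\,(\lambda_i + \zeta)$ and minimizing over $\zeta$ for each target index $i$ (there are only $O(\log n)$ dyadic scales of $\zeta$ to union over) produces the piecewise bound. The crossovers occur where the minimizing $\zeta$ moves from the range $\zeta \lesssim \lambda_i$ (optimal $\zeta \asymp \lambda_i$) into the range $\zeta \gtrsim \lambda_i$, and where the Bernstein variance term $\sqrt{M(\zeta)/n}$ hands over to the almost-sure term $M(\zeta)/n$; these two transitions are exactly what the thresholds $n^{\frac{\alpha-1}{\alpha}\frac{1}{2r+1}}$ and $n^{1/2r}$ encode, and reading off the optimal $\zeta$ in each of the three regimes yields the claimed bound $B(i,n)\log n$.

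The step I expect to be the main obstacle is this final optimization and bookkeeping: one must keep all constants uniform in $i$, verify that the three pieces match (up to constants) at the thresholds, and --- the one point genuinely new relative to \citet{valdivia2018relative} --- check that every estimate in their $r \ge 1$ argument (they treat $r = 0$ separately) relies only on the convergence of $\sum_j j^{2r-\alpha}$ and on the growth $\|u_j\|_\infty = O(j^r)$, neither of which requires $r \in \mathbb{N}$; granting this, their derivation applies verbatim to all $r > 0$. It is essential throughout that the control be \emph{relative}: an absolute bound $\|\hat C - C\| = O(n^{-1/2})$ only gives $|\hat\lambda_i/n - \lambda_i| = O(n^{-1/2})$, which is vacuous once $\lambda_i \ll n^{-1/2}$, i.e.\ precisely in the tail of the spectrum where the lemma is applied.
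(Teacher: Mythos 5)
The paper does not prove this lemma at all: it cites \citet{valdivia2018relative} directly and merely notes that Valdivia's derivation, carried out there for integer $r\geq 1$, applies verbatim once the hypothesis $r\geq 1$ is weakened to $r>0$, since the argument only uses $\alpha>2r+1$ and $\|u_j\|_\infty=\O(j^r)$. Your proposal agrees with the paper on exactly that meta-point, which is the one substantive thing the paper says about the lemma. Where you diverge is in the sketch of ``the structure of their proof.'' You describe the regularized effective-dimension route --- sandwich $\hat C - C$ with $(C+\zeta I)^{-1/2}$, operator Bernstein with the leverage quantity $M(\zeta)\asymp\zeta^{-(2r+1)/\alpha}$, then optimize over $\zeta$. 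That is a well-known route (Caponnetto--De Vito, Rudi et al.), but it is not what Valdivia does: their bound is obtained by truncating the kernel $k=k^{\le m}+k^{>m}$, relating the eigenvalues of $K^{\le m}$ to those of the $m\times m$ Gram matrix $\tfrac1n\Phi_m^\top\Phi_m$ of the first $m$ eigenfunctions, controlling $\|K^{>m}\|$ by the tail $\sum_{j>m}\lambda_j\|u_j\|_\infty^2$, applying Weyl, and balancing over $m$. The three regimes of $B(i,n)$ and the characteristic $\tfrac{\alpha}{\alpha-1}$ and $\tfrac{\alpha-1}{\alpha}$ exponents are artifacts of that $m$-optimization, and they do not drop out of your $\zeta$-balancing.

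More concretely, the step ``reading off the optimal $\zeta$ in each of the three regimes yields the claimed bound $B(i,n)\log n$'' is asserted, not shown, and a direct computation suggests it fails. Your inequality $|\hat\lambda_i/n-\lambda_i|\le t(\lambda_i+\zeta)$ with $t\asymp\sqrt{M(\zeta)/n}$ and $M(\zeta)\asymp\zeta^{-(2r+1)/\alpha}$ is minimized at $\zeta\asymp\lambda_i\asymp i^{-\alpha}$, giving $i^{-\alpha+r+1/2}n^{-1/2}$; this is not any of the three pieces of $B(i,n)$ (the closest, $i^{-\alpha+r+1}n^{-1/2}$, differs by a factor of $i^{1/2}$), and the admissibility constraint $M(\zeta)\lesssim n$ forces $\zeta\gtrsim n^{-\alpha/(2r+1)}$, producing the threshold $i\lesssim n^{1/(2r+1)}$ rather than the lemma's $n^{\frac{\alpha-1}{\alpha}\frac{1}{2r+1}}$ and $n^{1/(2r)}$. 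So while your high-level conclusion (cite Valdivia; the integer-$r$ restriction is cosmetic) matches the paper, the technical sketch you substitute for Valdivia's proof would not, as written, reproduce the piecewise bound the lemma asserts; if you wanted to make this self-contained you would need to follow Valdivia's truncation-plus-Weyl argument rather than the regularized-operator route.
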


Via some rearrangement we can show that
\begin{equation*}
    B(i,n) = \O\br{i^{-\alpha}}, \qquad \text{ if } 1 \leq i \leq n^{\frac{1}{2r}},
\end{equation*}
and by Lemma~\ref{lemma:series_upper_bounds} we have that
\begin{equation}
\label{eq:poly_eval_series_1}
    \sum_{l=d+1}^{\lfloor n^{1/2r} \rfloor} \abs{\frac{\hat{\lambda}_l}{n} - \lambda_l} \leqc \log(n)\sum_{l=d+1}^{\lfloor n^{1/2r} \rfloor} i^{-\alpha} \leqc n^{-\frac{\alpha-1}{\alpha}} \log(n).
\end{equation}
In addition, we have that
\begin{equation}
\label{eq:poly_eval_series_2}
\begin{aligned}
    \sum_{l=\lceil n^{1/2r} \rceil}^{n} \abs{\frac{\hat{\lambda}_l}{n} - \lambda_l} &\leqc n^{-1/2}\log(n) \sum_{l=\lceil n^{1/2r} \rceil}^{n} i^{-\alpha + r + 1}\\
    &\leqc n^{-1/2}\log(n) \cdot \br{n^{\frac{1}{2r}}}^{-\alpha + r + 2} \\
    &\leqc  n^{-1} \log(n) \\
    &\leqc  n^{-\frac{\alpha - 1}{\alpha}}
\end{aligned}
\end{equation}
where we have used that $0 < r < (\alpha - 1) / 2$. Combining \eqref{eq:poly_eval_series_1} with \eqref{eq:poly_eval_series_2} establishes \eqref{eq:eval_dev_bound} under (P) assuming $r>0$. The case with $r=0$ follows analogous fashion so we omit the details.

We now turn to the hypothesis (E). The authors only explicitly derive a result for the case that $\gamma = 1$, however following through their proof with Lemma~\ref{lemma:series_upper_bounds} to hand, we obtain the following for the general case that $0 < \gamma \leq 1$.

\begin{lemma}
\label{lemma:rel_concentration_bound_E}
    Suppose that the hypothesis (E) holds with $s>0$. Then, with overwhelming probability
    \begin{equation*}
        \abs{\frac{\hat{\lambda}_i}{n} - \lambda_i} \leqc e^{(-\beta + \delta)i^\gamma}i^{1 - \gamma}n^{-1/2}\log(n)
    \end{equation*}
    for all $1 \leq i \leq n$.
\end{lemma}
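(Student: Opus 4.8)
The plan is to reproduce the proof of the corresponding bound in \citet{valdivia2018relative}, which is stated there only for $\gamma = 1$, checking that the sole place the restriction $\gamma = 1$ enters is in the estimation of tail sums of the spectrum --- which we now control for every $0 < \gamma \leq 1$ via Lemma~\ref{lemma:series_upper_bounds}. Fix an index $1 \leq i \leq n$ and a truncation level $m \geq i$ to be chosen below. Split the Mercer expansion~\eqref{eq:kernel_spectral_decomp} into a head and a tail, $k = k_{\leq m} + k_{> m}$ with $k_{\leq m}(x,y) = \sum_{j \leq m} \lambda_j u_j(x) u_j(y)$, inducing $K = K_{\leq m} + K_{> m}$. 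The nonzero eigenvalues of the rank-$m$ matrix $K_{\leq m}$ are exactly those of $\Phi_m^\top \Phi_m$, where $\Phi_m$ is the $n \times m$ feature matrix with rows $\br{\sqrt{\lambda_1}\,u_1(x_a), \ldots, \sqrt{\lambda_m}\,u_m(x_a)}$, so that $\tfrac1n \Phi_m^\top\Phi_m$ is an average of i.i.d.\ rank-one matrices with mean $\operatorname{diag}(\lambda_1, \ldots, \lambda_m)$.

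For the head, I would follow \citet{valdivia2018relative} and combine a relative (Ostrowski-type) eigenvalue perturbation bound with a matrix Bernstein inequality applied to the rescaled matrix $\tfrac1n\Psi_m^\top\Psi_m$, where $\Psi_m$ has rows $\br{u_1(x_a), \ldots, u_m(x_a)}$, controlling the relevant part of its deviation from the identity by an effective-dimension quantity at the level $\lambda_i$ --- Valdivia's argument localises around the $i$-th eigenvalue rather than treating the whole $m \times m$ block, and under hypothesis~(E) this quantity is $\O\br{i\,e^{2si^\gamma}}$. This gives, with overwhelming probability and simultaneously over $i$, a relative bound $\bigabs{\hat\lambda_i(K_{\leq m})/n - \lambda_i} \leqc \lambda_i\br{\sqrt{i\,e^{2si^\gamma}\log(n)/n} + i\,e^{2si^\gamma}\log(n)/n}$, which is exactly Valdivia's display with $e^{sj}$ replaced by $e^{sj^\gamma}$. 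For the tail, a matrix Bernstein bound for the centred matrix $K_{> m}$ --- whose variance proxy is $\O\br{\lambda_{m+1}\sum_{j>m}\lambda_j}$ --- gives $\norm{K_{>m}}_{\mathrm{op}}/n \leqc \sum_{j>m}\lambda_j \leqc e^{-\beta m^\gamma}m^{1-\gamma}$ with overwhelming probability, the last step being Lemma~\ref{lemma:series_upper_bounds}; Weyl's inequality transfers this to $\bigabs{\hat\lambda_i/n - \hat\lambda_i(K_{\leq m})/n}$, and the triangle inequality then combines the two into a bound on $\bigabs{\hat\lambda_i/n - \lambda_i}$.

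It remains to optimise the truncation level. I would take $m(i) := \lceil (i^\gamma + \tfrac{1}{2\beta}\log n)^{1/\gamma} \rceil \wedge n$, so that $e^{-\beta m^\gamma} \leqc e^{-\beta i^\gamma}n^{-1/2}$ and hence the tail contribution is $\leqc e^{-\beta i^\gamma}i^{1-\gamma}n^{-1/2}$ up to a polylogarithmic factor, using $m^{1-\gamma} \leqc i^{1-\gamma} + (\log n)^{(1-\gamma)/\gamma}$ (split into cases according to whether $i^\gamma$ or $\log n$ dominates in $m^\gamma$). For the head, $\lambda_i \leqc e^{-\beta i^\gamma}$ and the effective dimension is $\O\br{i\,e^{2si^\gamma}}$, so the dominant head term is $\leqc e^{-\beta i^\gamma}\sqrt{i}\,e^{si^\gamma}\sqrt{\log(n)/n} = e^{(s-\beta)i^\gamma}\sqrt{i}\,n^{-1/2}\sqrt{\log n}$; since $s < \beta/2$ by~(E), the exponential factor $e^{(s-\beta)i^\gamma}$ dominates every polynomial in $i$, so this is $\leqc e^{(-\beta+\delta)i^\gamma}i^{1-\gamma}n^{-1/2}\log(n)$ for a suitable $\delta = \delta(s,\beta)>0$, which also soaks up the stray logarithmic factors above. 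A union bound over the $n$ values of $i$ --- each event holding with overwhelming probability, a property preserved under polynomial-size unions --- yields the simultaneous statement.

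The main obstacle is this final bookkeeping: verifying that the re-chosen truncation level works uniformly over all $0 < \gamma \leq 1$, that the polynomial corrections $m^{1-\gamma}$ and the extra powers of $\log n$ are genuinely absorbed into the surplus $e^{\delta i^\gamma}$ and the single $\log(n)$ of the statement, and that the edge regimes --- $i$ small, and $i$ large enough that $m(i)$ is capped at $n$ --- still satisfy the claimed bound. A secondary, more routine point is to confirm that Valdivia's relative perturbation and effective-dimension machinery nowhere exploits the exact geometric progression $\lambda_{j+1}/\lambda_j \equiv e^{-\beta}$ beyond what Lemma~\ref{lemma:series_upper_bounds} supplies; as with the polynomial case of Lemma~\ref{lemma:rel_concentration_bound_P}, the claim is that the proof ``follows through in exactly the same way'', and checking this amounts to locating the handful of places a spectral tail sum is invoked and substituting the general-$\gamma$ estimate.
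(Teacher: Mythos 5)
Your proposal is exactly what the paper does --- or rather, what the paper asserts without detail: the paper's entire ``proof'' of Lemma~\ref{lemma:rel_concentration_bound_E} is the remark that it follows by rerunning the $\gamma=1$ argument of \citet{valdivia2018relative} with the general-$\gamma$ tail estimate of Lemma~\ref{lemma:series_upper_bounds} substituted wherever a spectral tail sum appears, which is precisely the program you lay out (Mercer head/tail split, relative perturbation plus matrix Bernstein for the head, operator-norm control plus Weyl for the tail, then tune the truncation level $m$). The loose end you flag yourself --- whether the stray $(\log n)^{(1-\gamma)/\gamma}$ arising from $m^{1-\gamma}$ when $\gamma<\tfrac12$ is genuinely absorbed into the single $\log n$ of the statement --- is a real gap, but one the paper silently inherits rather than resolves; it is immaterial where the lemma is actually invoked, since the downstream computation in Section~\ref{sec:sample_eval_bound} only needs the tail sum to be $\O(n^{-1})$ and has an $n^{-\Omega(1)}$ surplus that swallows any fixed power of $\log n$.
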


Therefore we have that
\begin{equation*}
\begin{aligned}
    \sum_{i=d+1}^\infty \abs{\frac{\hat{\lambda}_i}{n} - \lambda_i} &\leqc  n^{-1/2}\log(n) \sum_{i=d+1}^n e^{(-\beta + s)i}i^{1-\gamma} \\
    &\leq  n^{-1/2}\log(n) \sum_{i=d+1}^n e^{-(\beta/2 + \epsilon)i^\gamma}i^{1-\gamma} \\
    &\leqc  n^{-1/2}\log(n) \sum_{i=d+1}^n e^{-(\beta i^\gamma/2 + \epsilon)} \\
    &\leqc  n^{-1/2}\log(n) n^{-(1/2 + \epsilon)} \\
    &= \O(n^{-1})
\end{aligned}
\end{equation*}
where in the second inequality we have used the assumption that $s < \beta / 2$ and in the fourth we have used Lemma~\ref{lemma:series_upper_bounds}. The case for $s=0$ follows similarly, so we omit the details. Then Equation~\eqref{eq:sample_eval_bound} is established.

\subsection{Proof of Lemma~\ref{lemma:series_upper_bounds}}
\label{sec:proof_series_upper_bounds}

To bound the polynomial series, we upper bound it by an integral as
\begin{equation*}
    \sum_{i=d+1}^\infty i^{-\alpha} \leq \int_d^\infty t^{-\alpha} \d t = \frac{d^{-\alpha + 1}}{-\alpha + 1} = \O\br{d^{-\alpha+1}}.
\end{equation*}

To bound the exponential series, we again employ an integral approximation, and upper bound it as
\begin{equation*}
    \sum_{i=d+1}^\infty e^{-\beta i^\gamma} \leq \int_d^\infty e^{-\beta t^\gamma} \d t.
\end{equation*}
We then apply the substitution $u = \beta t^\gamma$ to obtain
\begin{equation*}
    \int_d^\infty e^{-\beta t^\gamma} \d t = \frac{1}{\gamma} \int_{\beta d^\gamma}^\infty e^{-u} u^{(1 - \gamma)/\gamma} \d u = \frac{1}{\gamma} \Gamma\br{\frac{1}{\gamma}, \beta d^\gamma}
\end{equation*}
where $\Gamma$ denotes the incomplete Gamma function. We can then use the fact that $\Gamma(s,x) \leq e^{-x}x^{s-1}$ for $s > 0$ to obtain the upper bound
\begin{equation*}
    \frac{1}{\gamma} \Gamma\br{\frac{1}{\gamma}, \beta d^\gamma} \leq \frac{1}{\gamma}e^{-\beta d^\gamma} \br{\beta d^\gamma}^{1/\gamma - 1} = \frac{\beta}{\gamma}e^{\beta d^\gamma} d^{1 - \gamma},
\end{equation*}
from which we can conclude that
\begin{equation*}
    \sum_{i=d+1}^\infty e^{-\beta i^\gamma} = \O\br{e^{\beta d^\gamma} d^{1 - \gamma}},
\end{equation*}
as required.

\section{Proof of Lemma~\ref{lemma:distance_between_a_random_vector_and_a_subspace}}
\label{sec:distance_between_a_random_vector_and_a_subspace}

The proof of Lemma~\ref{lemma:distance_between_a_random_vector_and_a_subspace} generalises the proof of Lemma~43 of \citet{tao2011random}. We will make use of the following theorem due to \citet{ledoux2001concentration} which is a corollary of Talagrand's inequality \citep{talagrand1996new}.

\begin{theorem}[Talagrand's inequality]
    Let $y = (y_1,\ldots,y_n)^\top$ be a vector of independent random variables, and let $f : \R^n \to \R$ be a convex $1$-Lipschitz function. Then, for all $t \geq 0$,
    \begin{equation*}
        \P \br{\abs{f(y) - M(f)} \geq t} \leq 4 \exp \br{-t^2 / 16}
    \end{equation*}
    where $M(f)$ denotes the median of $f$.
\end{theorem}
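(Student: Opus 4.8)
The plan is to derive the stated concentration from Talagrand's \emph{convex distance inequality}, which I would in turn prove by induction on the dimension $n$. Since the conclusion has a dimension-free constant, I take the coordinates to lie in $[0,1]$ almost surely (an interval of length one) — this is the situation in Ledoux's formulation and in our intended application (Lemma~\ref{lemma:distance_between_a_random_vector_and_a_subspace}), and for a general such interval the argument is identical. For measurable $A \subseteq [0,1]^n$ and $x \in [0,1]^n$ define the \emph{convex distance}
\begin{equation*}
    d_T(x,A) := \min\cbr{\norm{v}_2 : v \in \mathrm{conv}\,U_A(x)}, \qquad U_A(x) := \cbr{\br{\mathbf{1}[x_i \neq y_i]}_{i=1}^n : y \in A},
\end{equation*}
which by linear-programming duality equals $\sup_{\norm{\alpha}_2 \leq 1}\inf_{y\in A}\sum_{i : x_i \neq y_i}\alpha_i$; note $\mathrm{conv}\,U_A(x)$ is a polytope, so the minimum is attained.

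\emph{Step 1 (geometry $\Rightarrow$ theorem).} The bridge is the elementary lemma: if $g$ is convex and $1$-Lipschitz for $\norm{\cdot}_2$ and $g \leq M$ on $A$, then $g(x) \leq M + d_T(x,A)$ for every $x$. To see this, write an optimal $v^\star = \sum_k \mu_k u^{(k)} \in \mathrm{conv}\,U_A(x)$ with each $u^{(k)}$ the mismatch vector of some $y^{(k)} \in A$; the point $\bar y := \sum_k \mu_k y^{(k)}$ lies in $\mathrm{conv}(A) \subseteq [0,1]^n$ and satisfies $\abs{x_i - \bar y_i} \leq \sum_k \mu_k \mathbf{1}[x_i \neq y^{(k)}_i] = v^\star_i$, so $\norm{x - \bar y}_2 \leq \norm{v^\star}_2 = d_T(x,A)$; convexity gives $g(\bar y) \leq \max_k g(y^{(k)}) \leq M$ and Lipschitzness then gives $g(x) \leq g(\bar y) + \norm{x - \bar y}_2$. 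Applying this with $M = M(f)$ and $A = \cbr{f \leq M(f)}$ (so $\P(A) \geq 1/2$) gives $\cbr{f \geq M(f) + t} \subseteq \cbr{d_T(\cdot, A) \geq t}$; applying it with $B = \cbr{f \leq M(f) - t}$ gives $d_T(x,B) \geq t$ whenever $f(x) \geq M(f)$, hence $\P(d_T(\cdot, B) \geq t) \geq 1/2$. Granting the convex distance inequality $\P(A)\,\P(d_T(\cdot, A) \geq t) \leq e^{-t^2/4}$ for all $A$, these two inclusions yield $\P(f \geq M(f)+t) \leq 2e^{-t^2/4}$ and $\P(f \leq M(f) - t) \leq 2e^{-t^2/4}$, and summing the tails $\P(\abs{f(x) - M(f)} \geq t) \leq 4e^{-t^2/4} \leq 4e^{-t^2/16}$, which is the claim (indeed I would obtain the sharper constant $t^2/4$).

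\emph{Step 2 (the convex distance inequality).} By Markov's inequality it suffices to prove $\E\exp\br{d_T(x,A)^2/4} \leq 1/\P(A)$, which I would establish by induction on $n$. For $n=1$, $d_T(x,A) = \mathbf{1}[x \notin A]$, so the bound reduces to $u + (1-u)e^{1/4} \leq 1/u$ for $u = \P(A) \in (0,1]$, an elementary calculus fact (the left side, as a function of $u$, is increasing on $[0,1]$ with value $1$ at $u=1$). For the inductive step I would split off the last coordinate: with $A_\omega$ the section of $A$ at $x_n = \omega$ and $B$ the projection of $A$ onto the first $n-1$ coordinates, lifting witnesses for $A_\omega$ and for $B$ and taking a $\lambda$-mixture (paying at most $\lambda^2$ for the extra coordinate) gives, for every $\lambda \in [0,1]$,
\begin{equation*}
    d_T\br{(x',\omega), A}^2 \leq (1-\lambda)\, d_T(x', A_\omega)^2 + \lambda\, d_T(x', B)^2 + \lambda^2 .
\end{equation*}
Conditioning on $x_n = \omega$, inserting this, and applying the inductive hypothesis to the two $(n-1)$-dimensional factors together with Hölder's inequality with exponents $\tfrac{1}{1-\lambda}, \tfrac{1}{\lambda}$ gives $\E\br{\exp(d_T((\cdot,\omega),A)^2/4)\mid x_n=\omega} \leq e^{\lambda^2/4}\,\P(A_\omega)^{-(1-\lambda)}\P(B)^{-\lambda}$; optimising over $\lambda$ via the calculus inequality $\inf_{\lambda\in[0,1]} e^{\lambda^2/4} r^{\lambda - 1} \leq 2 - r$ (for $r = \P(A_\omega)/\P(B) \in (0,1]$), then integrating over $\omega$ and using $(\P(B) - \P(A))^2 \geq 0$, closes the induction.

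\emph{Main obstacle.} The routine parts are the two calculus facts, the reduction in Step 1, and checking measurability of $d_T(\cdot, A)$ (standard, by approximating $A$ with finite unions of cylinders, which justifies the conditioning and Fubini steps). The genuinely delicate ingredient is the inductive step of Step 2 — in particular verifying the geometric sub-additivity estimate for $d_T$ and carrying out the optimisation over $\lambda$ — which is precisely the heart of Talagrand's argument; everything else is bookkeeping.
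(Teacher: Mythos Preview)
The paper does not prove this theorem: it is quoted from \citet{ledoux2001concentration} (itself a corollary of \citet{talagrand1996new}) and immediately applied as a black box to the map $x \mapsto \norm{\pi_H(x)}$ in the proof of Lemma~\ref{lemma:distance_between_a_random_vector_and_a_subspace}. There is no proof in the paper to compare your attempt against.

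Your sketch is the standard derivation and is sound. The bridge lemma in Step~1 and its two applications to the sublevel sets $\{f \leq M(f)\}$ and $\{f \leq M(f)-t\}$ correctly handle both tails; the inductive proof of the convex distance inequality in Step~2 --- the sub-additivity estimate for $d_T^2$, H\"older with exponents $\tfrac{1}{1-\lambda},\tfrac{1}{\lambda}$, the optimisation $\inf_{\lambda\in[0,1]} e^{\lambda^2/4}r^{\lambda-1}\leq 2-r$, and the closing observation $(\P(B)-\P(A))^2\geq 0$ --- is exactly Talagrand's argument. You are also right to flag that the paper's statement tacitly assumes bounded coordinates; without this the conclusion is false, and the paper only ever invokes the result for $[0,1]$-valued entries. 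One small slip: in your $n=1$ base case, the left-hand side $u+(1-u)e^{1/4}$ is \emph{decreasing} in $u$, not increasing. The inequality $u+(1-u)e^{1/4}\leq 1/u$ nonetheless holds on $(0,1]$, since $h(u):=u+(1-u)e^{1/4}-1/u$ satisfies $h(1)=0$ and $h'(u)=1-e^{1/4}+u^{-2}>0$ there. Your argument in fact yields the sharper exponent $-t^2/4$; the paper's $-t^2/16$ is slack.
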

It is easy to verify that the map $y \to \norm{\pi_H(y)}$ is convex and $1$-Lipschitz, and so by Talagrand's inequality we have that
\begin{equation}
    \label{eq:proj_talagrand}
    \P \br{\abs{\norm{\pi_H(y)} - M(\norm{\pi_H(y)})} \geq t} \leq 4 \exp \br{-t^2 / 16}.
\end{equation}
For $t \geq 8$, we have that
\begin{equation*}
    4 \exp\br{-(t-4)^2/16} \leq 4\exp \br{-t^2/32},
\end{equation*}
so to conclude the proof, it suffices to show that
\begin{equation}
    \label{eq:median_bound}
    \abs{M(\norm{\pi_H(x)}) - \sigma\sqrt{q}} \leq 4.
\end{equation}
Let $\*E_+$ denote the event that $\norm{\pi_H(x)} \geq \sigma\sqrt{q} + 4$ and let $\*E_-$ denote the event that $\norm{\pi_H(x)} \leq \sigma\sqrt{q} - 4$. By the definition of a median, \eqref{eq:median_bound} is established if we can show that $\P(\*E_+) < 1/2$ and $\P(\*E_-) < 1/2$.

Let $\epsilon$ be the mean-zero random vector such that $y = \bar y + \epsilon$, and let $P = (p_{ij})_{1\leq i,j\leq n}$ be the orthogonal projection matrix onto $H$.
We have that $\tr P^2 = \tr P = \sum_i p_{ii} = q$ and $|p_{ii}| \leq 1$. Furthermore
\begin{equation*}
    \norm{\pi_H(\epsilon)}^2 - \sigma^2 q = \sum_{1\leq i,j \leq n} p_{ij} \epsilon_i \epsilon_j - \sigma^2 q 
    = S_1 + S_2.
\end{equation*}
where $S_1 = \sum_{i=1}^n p_{ii} (\epsilon_i^2 - \sigma^2)$ and $S_2 = \sum_{1\leq i \neq j \leq n} p_{ij} \epsilon_i \epsilon_j$. We now upper bound the expectations of $S_1^2$ and $S_2^2$ which we will use later on for bounding the probabilities of $\*E_+$ and $\*E_-$ using Markov's inequality. Before we do, note that since $\epsilon \in [-\bar y, 1 - \bar y]$ almost surely, Popoviciu's inequality implies that $\sigma^2 \leq 1/4$. Therefore, we also have that $\E(\epsilon_i^4) \leq \sigma^2$, and so
\begin{equation}
    \label{eq:E_S1}
    \begin{aligned}
        \E\br{S_1^2} &= \sum_{i,j=1}^n p_{ii}p_{jj} \E \cbr{\br{\epsilon_i^2 - \sigma^2}\br{\epsilon_j^2 - \sigma^2}}
        = \sum_{i=1}^n p_{ii}^2 \E \cbr{\br{\epsilon_i^2 - \sigma^2}^2}   \\
        &\leq \sum_{i=1}^n p_{ii}^2 \cbr{\E \epsilon_i^4 - 2\sigma^2 \E\br{\epsilon_i^2} + (\sigma^2)^2} \leq \sum_{i=1}^n p_{ii}^2 \cbr{\sigma^2 - 2(\sigma^2)^2 +  (\sigma^2)^2} \leq \sigma^2 q,
    \end{aligned}
\end{equation}
where the second inequality follows from the independence of $\br{\epsilon_i^2 - \sigma^2}$ and $\br{\epsilon_j^2 - \sigma^2}$ for all $i \neq j$. In addition, we have that
\begin{equation}
    \label{eq:E_S2}
    \E\br{S_2^2} = \E\br{\sum_{i\neq j} p_{ij}\epsilon_i\epsilon_j}^2 = \sum_{i\neq j} p_{ij}^2\E(\epsilon_i^2)\E(\epsilon_j^2) \leq (\sigma^2)^2 q \leq \frac{\sigma^2 q}{4}.
\end{equation}

To upper bound the probability of $\*E_+$, we first observe that by assumption
\begin{equation*}
    \norm{\pi_H(y)}^2 = \norm{\pi_H(\bar y)}^2 + \norm{\pi_H(\epsilon)}^2 \leq 4\sigma\sqrt{q} + \norm{\pi_H( \epsilon)}^2
\end{equation*}
and therefore we have that
\begin{equation*}
    \P(\*E_+) = \P\br{\norm{\pi_H(y)} \geq \sigma\sqrt{q} + 4}
    \leq \P\br{\norm{\pi_H(y)}^2 \geq \sigma^2 q + 8\sigma \sqrt{q}} 
    \leq \P\br{\norm{\pi_H(\epsilon)}^2 \geq \sigma^2 q + 4\sigma \sqrt{q}}.
\end{equation*}
Using the definitions of $S_1$ and $S_2$, it follows that
\begin{equation*}
    \P(\*E_+) \leq \P\br{S_1 + S_2 \geq 4\sigma \sqrt{q}} \leq \P\br{S_1 \geq 2\sigma\sqrt{q}} + \P\br{S_2 \geq 2\sigma \sqrt{q}}
\end{equation*}

By Markov's inequality, we have that
\begin{equation*}
    \P\br{S_1 \geq 2\sigma \sqrt{q}} = \P\br{S_1^2 \geq 4 \sigma^2 q} \leq \frac{\E\br{S_1^2}}{4\sigma^2 q} \leq \frac{1}{4}
\end{equation*}
and similarly that
\begin{equation*}
    \P\br{S_2 \geq 2\sigma\sqrt{q}} = \P\br{S_2^2 \geq 4 \sigma^2 q} \leq \frac{\E\br{S_2^2}}{4 \sigma^2 q} \leq \frac{1}{16}.
\end{equation*}
It therefore follows that $\P(\*E_+) < 1/2$. We upper bound $\P(\*E_-)$ in a similar fashion. Since $\norm{\pi_H(x)} \geq \norm{\pi_H(\epsilon)}$, we have that
\begin{equation*}
    \P \br{\*E_-} = \P(\norm{\pi_H(y)} \leq \sigma \sqrt{q} - 4) \leq \P(\norm{\pi_H(\epsilon)} \leq \sigma \sqrt{q} - 4) = \P(\norm{\pi_H(\epsilon)}^2 \leq \sigma^2 q - 8\sigma \sqrt{q} + 16).
\end{equation*}
Again, recalling the definitions of $S_1$ and $S_2$ we have that
\begin{equation*}
    \P\br{\*E_-} \leq \P \br{S_1 + S_2 \leq - 8\sigma \sqrt{q} + 16} \leq \P(S_1 \leq 8 - 4\sigma \sqrt{q}) + \P(S_2 \leq 8 - 4\sigma \sqrt{q}).
\end{equation*}
As before, applying Markov's inequality we have that
\begin{equation*}
    \P(S_1 \leq 8 - 4\sigma \sqrt{q}) \leq \P(S_1^2 \geq 64 - 64 \sigma \sqrt{q} + 16 \sigma^2 q ) \leq \P\br{S_1^2 \geq 8 \sigma^2 q} \leq \frac{\E\br{S_1^2}}{8 \sigma^2 q} \leq \frac{1}{8}
\end{equation*}
where we have twice used the assumption that $q \geq 64 / \sigma^2$. Similarly
\begin{equation*}
    \P(S_2 \leq 8 - 4\sigma \sqrt{q}) \leq \P\br{S_2^2 \geq 8 \sigma^2 q} \leq \frac{\E\br{S_2^2}}{8\sigma^2 q} \leq \frac{1}{32}.
\end{equation*}
It therefore follows that $\P\br{\*E_-} < 1/2$, thereby establishing \eqref{eq:median_bound} and concluding the proof.

\section{Proof of the claim that $\smallnorm{\pi_J(\bar y)} \leq 2 |J|^{1/4}$}
\label{sec:proof_of_claim}

In this section, we prove the claim made in the proof of Theorem~\ref{thm:entrywise_bound} that 
\begin{equation}
\label{eq:claim_cond}
    \smallnorm{\pi_J(\bar y)} \leq 2 |J|^{1/4},
\end{equation}
with overwhelming probability, which we require in order to invoke Lemma~\ref{lemma:distance_between_a_random_vector_and_a_subspace}. For notational convenience, we shall assume we are working in an $n$-dimensional space, rather than an $(n-1)$-dimensional space as this is immaterial in our big-$\O$ bounds.

Recall that $\bar y$ is a constant vector with entries in $[0,1]$, and let $\one$ denote the all-ones vector. Let $\xi$ be some value such that $8 a < \xi < b/2$, which exists by assumption (R), and let $d'$ denote the smallest index such that
\begin{equation*}
    \lambda_{d'+1} = \O\br{n^{-1/\xi}}.
\end{equation*}
This implies that under (P),  $d' = \O\br{n^{1/\xi\alpha}}$,
and under (E), $d' < \log^{1/\gamma}(n^{1/\xi\beta})$.
Clearly $d' \leq d$ and so $J \subset \cbr{d'+1,\ldots,n}$, and therefore
\begin{equation*}
    \smallnorm{\pi_J(\bar y)} \leq \norm{\pi_{\cbr{d'+1,\ldots,n}}(\one)}.
\end{equation*}
In addition, we observe that
\begin{equation*}
    \norm{\pi_{\cbr{d'+1,\ldots,n}}(\one)}^2 = n - \norm{\pi_{\cbr{1,\ldots,d'}}(\one)}^2.
\end{equation*}
Since $\abs{J} = \Omega(n)$, it will suffice to show that 
\begin{equation}
\label{eq:proj_cond}
    \frac{1}{n}\norm{\pi_{\cbr{1,\ldots,d'}}(\one)}^2 = 1 - \Omega\br{n^{-1/2 - \Omega(1)}}
\end{equation}
with overwhelming probability.

Let $\hat{U}_{d'}$ and $U_{d'}$ denote the $n \times d'$ matrices with entries
\begin{equation*}
    \hat{U}_{d'}(i,j) = \hat{u}_j(i), \qquad U_{d'}(i,j) = \frac{u_j(x_i)}{n^{1/2}}
\end{equation*}
respectively. Then we have
\begin{equation*}
    \frac{1}{n}\norm{\pi_{\cbr{1,\ldots,d'}}(\one)}^2 = \frac{1}{n}\norm{\hat{U}_{d'}\hat{U}_{d'}^\top \one}^2 = \frac{1}{n}\norm{\hat{U}_{d'}^\top \one}^2 = \frac{1}{n}\norm{(\hat{U}_{d'}W)^\top \one}^2,
\end{equation*}
where $W$ is an orthogonal matrix which we will define later on. By the triangle inequality, we have that
\begin{equation*}
    \frac{1}{n}\norm{(\hat{U}_{d'}W)^\top \one}^2 \geq \frac{1}{n}\norm{U_{d'}^\top \one}^2 - \frac{1}{n}\norm{\hat{U}_{d'}W - U_{d'}^\top}^2\norm{\one}^2 = \frac{1}{n}\norm{U_{d'}^\top \one}^2 - \norm{\hat{U}_{d'}W - U_{d'}^\top}^2
\end{equation*}
and therefore to show \eqref{eq:proj_cond}, we need to show that
\begin{equation}
\label{eq:population_proj_cond}
    n^{-1}\norm{U_{d'}^\top \one}^2 = 1 - \O\br{n^{-1/2-\Omega(1)}}
\end{equation}
and 
\begin{equation}
\label{eq:subspace_dist_cond}
    \norm{\hat{U}_{d'}W - U_{d'}^\top} = O\br{n^{-1/4 - \Omega(1)}}
\end{equation}
with overwhelming probability.

\subsection{Bounding~\eqref{eq:population_proj_cond}}

To bound \eqref{eq:population_proj_cond}, we begin by using the the inequality $(c_1+c_2)^2 \leq 2(c_1^2 + c_2^2)$ to write
\begin{equation*}
\begin{aligned}
    n^{-1}\norm{U_{d'}^\top\one}^2 &= n^{-1}\sum_{l=1}^{d'} \br{\sum_{i=1}^n \frac{u_l(x_i)}{n^{1/2}}}^2 = \sum_{l=1}^{d'} \br{\sum_{i=1}^n \frac{u_l(x_i)}{n}}^2 \\
    &\geq \sum_{l=1}^{d'} \br{\int u_l(x) \d \rho(x)}^2 - 2 \sum_{l=1}^{d'} \br{\sum_{i=1}^n \frac{u_l(x_i)}{n} - \int u_l(x) \d \rho(x)}^2.
\end{aligned}
\end{equation*}
To bound the first term, we observe that since $\cbr{u_i}_{i=1}^\infty$ forms an orthonormal basis for $L^2_\rho(\*X)$, we have that
\begin{equation*}
    \sum_{l=1}^\infty \br{\int u_l(x) \d\rho(x)}^2 = 1
\end{equation*}
and therefore
\begin{equation*}
    \sum_{l=1}^{d'}\br{\int u_l(x) \d\rho(x)}^2 = 1 - \sum_{l=d'+1}^\infty \br{\int u_l(x) \d\rho(x)}^2 =: 1 - \Gamma_{d'+1}
\end{equation*}
By assumption, 
\begin{equation*}
    \Gamma_{d'+1} = \O\br{ \lambda_{d'+1}^b} = \O\br{ n^{-b/\xi }} = \O\br{n^{-1/2 - \Omega(1)}}
\end{equation*}
where we have used the assumption that $b > \xi/2$, and therefore
\begin{equation*}
    \sum_{l=1}^{d'} \br{\int u_l(x) \d \rho(x)}^2 = 1 - \O\br{n^{-b/\xi}} = 1 - \O\br{n^{-1/2 - \Omega(1)}},
\end{equation*}
as required.

Now to bound the second term, we use Hoeffding's inequality to obtain that
\begin{equation*}
    \abs{\sum_{i=1}^n \frac{u_l(x_i)}{n} - \int u_l(x) \d \rho(x)} = \O\br{\norm{u_l}_\infty \frac{\log^{1/2}(n)}{n^{1/2}}}.
\end{equation*}
Under (P), we have that for all $l \leq d'$, 
\begin{equation*}
    \norm{u_l}_\infty \leqc {d'}^{r} \leqc n^{r/\xi \alpha} = \O\br{n^{1/2\xi}} = \O(n^{1/16 - \Omega(1)})
\end{equation*}
where we have used that $r < (\alpha - 1)/2 \leq \alpha/2$, and that $\xi > 8$, and under (E)
\begin{equation*}
    \norm{u_l}_\infty \leqc e^{s {d'}^\gamma} \leqc e^{s\log(n)/\xi\beta} = \O\br{n^{1/2\xi}} = \O(n^{1/16 - \Omega(1)})
\end{equation*}
where we have used that $s < \beta / 2$. Therefore, since $d' = \O\br{n^{1/8}}$, we have that
\begin{equation*}
    \sum_{l=1}^{d'} \br{\sum_{i=1}^n \frac{u_l(x_i)}{n} - \int u_l(x) \d \rho(x)}^2 \leqc d' \br{n^{1/16 - 1/2}}^2 = \O\br{n^{-3/4}}
\end{equation*}
which is $\O\br{n^{-1/2 - \Omega(1)}}$ as required.

\subsection{Bounding \eqref{eq:subspace_dist_cond}}
\label{sec:proof_subspace_dist_cond}

To bound \eqref{eq:subspace_dist_cond}, we begin by defining the $d' \times d'$ diagonal matrices $\hat{\Lambda}_{d'}$ and $\Lambda_{d'}$ with diagonal entries
\begin{equation*}
    \hat{\Lambda}_{d'}(i,i) := \frac{\hat{\lambda}_i}{n}, \qquad \Lambda_{d'}(i,i) := \lambda_i,
\end{equation*}
respectively, and the $n \times d'$ matrices $\hat{\Phi}_{d'}$ and $\Phi_{d'}$ with entries
\begin{equation*}
    \hat{\Phi}_{d'}(i,j) = \hat{\lambda}_j^{1/2}\hat{u}_j(i), \qquad \Phi_{d'}(i,j) = \lambda_j^{1/2}u_j(i),
\end{equation*}
respectively. Then in matrix notation we have that
\begin{equation*}
    \hat{\Phi}_{d'} = n^{1/2}\hat{U}_{d'}\hat{\Lambda}_{d'}^{1/2}, \qquad \Phi_{d'} = n^{1/2}U_{d'}\Lambda_{d'}^{1/2}
\end{equation*}

Now, we decompose $\hat{U}_{d'} W_{d'} - U_{d'}$ as
\begin{equation*}
    \hat{U}_{d'} W_{d'} - n^{-1/2}U_{d'} = \cbr{n^{-1/2} \br{ \hat{\Phi}_{d'} W_{d'} - \Phi_{d'}} + \hat{U}_{d'} \br{W_{d'} \Lambda_{d'}^{1/2} - \hat{\Lambda}_{d'}^{1/2} W_{d'}}}\Lambda^{-1/2}
\end{equation*}
and so we have that
\begin{equation*}
    \norm{\hat{U}_{d'} W_{d'} - U_{d'}} \leq \lambda_{d'}^{-1/2}\cbr{n^{-1/2} \norm{ \hat{\Phi}_{d'} W_{d'} - \Phi_{d'}}_{\F} + \norm{W_{d'} \Lambda_{d'}^{1/2} - \hat{\Lambda}_{d'}^{1/2} W_{d'}}}.
\end{equation*}
By the construction of $d'$ we have that $\lambda_{d'} = \Omega(n^{-1/\xi}) = \Omega(n^{-1/8})$ and therefore $\lambda_{d'}^{-1/2} = \O\br{n^{1/16}}$. 
Therefore to show \eqref{eq:subspace_dist_cond}, it will suffice to show that
\begin{equation}
\label{eq:feature_map_cond}
    \norm{ \hat{\Phi}_{d'} W_{d'} - \Phi_{d'}}_{\F} = \O\br{n^{\frac{3}{16} - \Omega(1)}}
\end{equation}
and 
\begin{equation}
\label{eq:swap_cond}
    \norm{W_{d'} \Lambda_{d'}^{1/2} - \hat{\Lambda}_{d'}^{1/2} W_{d'}} = \O\br{n^{-\frac{5}{16} - \Omega(1)}}.
\end{equation}

To obtain the bound \eqref{eq:feature_map_cond}, we make use of the following result which is Equation~3.8 of \citet{tang2013universally}. 

\begin{lemma}
\label{lemma:tang_feature_maps}
The exists an orthogonal matrix $W_d$ (constructed as in \eqref{eq:W_construction}) such that
\begin{equation*}
    \norm{\hat{\Phi}_d W_d - \Phi_d}_{\F} = \O \br{\frac{\log^{1/2}(n)}{\lambda_d - \lambda_{d+1}}}
\end{equation*}
with overwhelming probability.
\end{lemma}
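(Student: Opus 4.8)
\emph{Proof proposal.} Since this is Equation~3.8 of \citet{tang2013universally}, it suffices to invoke that result; for completeness I sketch the argument behind it. The plan is to pass to the associated integral operators, apply a standard eigenspace perturbation bound there, and transfer back to the sample points. Write $\*K_n := \tfrac{1}{n} K$ for the empirical integral operator, whose nonzero eigenvalues are the $\hat\lambda_j/n$ and whose eigenfunctions, evaluated at the sample points and normalised, coincide with the eigenvectors $\hat u_j$ of $\tfrac{1}{n} K$ (the correspondence between an integral operator and its empirical Gram matrix; see \citet{rosasco2010learning}). In matrix form this is the identity $n^{-1/2}\hat\Phi_d = \hat U_d \hat\Lambda_d^{1/2}$, where $\hat U_d$ has orthonormal columns $\hat u_1, \dots, \hat u_d$ and $\hat\Lambda_d$ is the diagonal matrix with entries $\hat\lambda_j/n$. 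Likewise $n^{-1/2}\Phi_d = U_d \Lambda_d^{1/2}$, where $U_d$ has columns $u_j(x_i)/\sqrt{n}$ --- which are \emph{nearly} orthonormal, by the law of large numbers together with the eigenfunction $\sup$-norm bounds in (P) or (E) --- and $\Lambda_d$ is the diagonal matrix with entries $\lambda_j$. Thus it is enough to show $\smallnorm{\hat U_d \hat\Lambda_d^{1/2} W_d - U_d \Lambda_d^{1/2}}_{\F} = \O\br{n^{-1/2}\log^{1/2}(n)\,/\,(\abs{\lambda_d} - \abs{\lambda_{d+1}})}$.

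Next, because $k$ is bounded, a Hilbert-space Hoeffding/Bernstein inequality --- of the kind used in \citet{rosasco2010learning} --- gives $\norm{\*K_n - \*K} = \O\br{n^{-1/2}\log^{1/2}(n)}$ with overwhelming probability, where $\*K$ is the population integral operator. By Weyl's inequality, $\hat\lambda_d/n = \lambda_d + \O\br{n^{-1/2}\log^{1/2}(n)}$ and similarly for $\hat\lambda_{d+1}$, so the $d$-th spectral gap of $\*K_n$ equals $\abs{\lambda_d} - \abs{\lambda_{d+1}}$ up to lower-order terms; and by the Davis--Kahan $\sin\Theta$ theorem the top-$d$ eigenspaces of $\*K_n$ and $\*K$ are within distance $\O\br{n^{-1/2}\log^{1/2}(n)\,/\,(\abs{\lambda_d} - \abs{\lambda_{d+1}})}$, again with overwhelming probability.

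It remains to promote this subspace bound to the stated bound on the feature maps. Taking $W_d$ to be a suitable Procrustes rotation --- the matrix $W_d$ of \eqref{eq:W_construction}, essentially the orthogonal factor in the polar decomposition of $\hat\Phi_d^\top \Phi_d$ --- one decomposes $\hat U_d \hat\Lambda_d^{1/2} W_d - U_d \Lambda_d^{1/2}$ into a term governed by the $\sin\Theta$ distance above (times $\norm{\Lambda_d^{1/2}} = \O(1)$), a term governed by the eigenvalue discrepancy $\smallnorm{\hat\Lambda_d^{1/2} - \Lambda_d^{1/2}} = \O\br{n^{-1/2}\log^{1/2}(n)}$, and a term governed by the near-orthonormality defect $\smallnorm{U_d^\top U_d - I_d}$, which is $\O\br{n^{-1/2}}$ up to polylogarithmic factors (again using the eigenfunction $\sup$-norm bounds). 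The dominant term is $\O\br{n^{-1/2}\log^{1/2}(n)\,/\,(\abs{\lambda_d} - \abs{\lambda_{d+1}})}$, and multiplying through by $n^{1/2}$ gives the lemma.

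I expect the main obstacle to be that this is not a textbook matrix-perturbation problem: the ``noise'' is the discrepancy between an operator on the $n$-dimensional sample space and one on the infinite-dimensional $L^2_\rho$, so the transfer between the two must go through the extension-operator machinery of \citet{rosasco2010learning}, and one must track carefully how the sample-point restriction interacts with the eigenfunctions. The eigenfunction $\sup$-norm bounds of (P)/(E) are essential here --- without them the restricted population eigenfunctions need not be nearly unit vectors and the identity $n^{-1/2}\Phi_d = U_d \Lambda_d^{1/2}$ with $U_d$ near-orthonormal would break down. A more routine point is that the Weyl step identifies the $d$-th gap of $\*K_n$ with $\abs{\lambda_d} - \abs{\lambda_{d+1}}$ only when that gap is large relative to $n^{-1/2}\log^{1/2}(n)$; in the opposite regime the claimed bound already exceeds $\smallnorm{\Phi_d}_{\F} = \O(n^{1/2})$ and so holds trivially.
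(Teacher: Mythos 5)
Your top-level move matches the paper exactly: the paper's ``proof'' of this lemma is precisely to cite Equation~3.8 of \citet{tang2013universally}, and you do the same. Your supplementary sketch is a reasonable high-level account of what goes on inside that reference, but several of its estimates are stated more confidently than you could justify without further care: the near-orthonormality defect $\norm{U_d^\top U_d - I_d}$ scales like $\O\br{d\,\max_{l\leq d}\norm{u_l}_\infty^2\, n^{-1/2}\log^{1/2}(n)}$ and is not $\O(n^{-1/2})$ for general $d$ --- in the paper's application this is controlled only because $d'$ and the relevant sup-norms are kept to small powers of $n$; the claimed bound $\norm{\hat\Lambda_d^{1/2}-\Lambda_d^{1/2}} = \O(n^{-1/2}\log^{1/2}(n))$ does not follow directly from the operator perturbation bound when $\lambda_d$ is itself small, since the square-root map amplifies relative error near zero; and $W_d$ in \eqref{eq:W_construction} is the matrix sign of the inner-product matrix $H_d = U_{\*H,d}^\top \hat U_{\*H,d}$ of population and empirical eigenfunctions in the RKHS, not of $\hat\Phi_d^\top\Phi_d$ (though the two Procrustes problems are closely related). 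None of this affects the correctness of the citation-based proof, since neither you nor the paper actually re-derives the lemma.
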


Now, let
\begin{equation}
\label{eq:dstar}
    d^\star := \arg\max_{i \geq d'} \cbr{\lambda_i - \lambda_{i+1}},
\end{equation}
then by the assumption (R), we have that 
\begin{equation}
\label{eq:dstar_gap}
    \lambda_{d^\star} - \lambda_{d^\star + 1} = \Omega(\lambda_{d'}^a) = \Omega(n^{-a/\xi}) = \Omega\br{n^{-1/8 + \Omega(1)}}.
\end{equation}
Using Lemma~\ref{lemma:tang_feature_maps}, we obtain that
\begin{equation*}
    \norm{\hat{\Phi}_{d'} W_{d'} - \Phi_{d'}}_{\F} \leq \norm{\hat{\Phi}_{d^\star} W_{d^\star} - \Phi_{d^\star}}_{\F} = \O \br{\frac{\log^{1/2}(n)}{\lambda_{d^\star} - \lambda_{d^\star+1}}} = \O\br{n^{1/8 - \Omega(1)}},
\end{equation*}
which establishes \eqref{eq:feature_map_cond}.
Obtaining the bound \eqref{eq:swap_cond} requires some new concepts, so we dedicate it its own section.

\subsection{Bounding \eqref{eq:swap_cond}}
\label{sec:proof_swap_cond}

We now turn our attention to the bound \eqref{eq:swap_cond}. We begin by defining $\*H$, the reproducing kernel Hilbert space associated with the kernel $k$, and define the operators $\*K_{\*H}, \hat{\*K}_{\*H} : \*H \to \*H$ given by
\begin{equation}
\label{eq:K_H_def}
    \begin{aligned}
        \*K_{\*H} f &= \int \inner{f, k_x}_{\*H} k_x \d \rho(x), \\
        \hat{\*K}_{\*H} &= \frac{1}{n} \sum_{i=1}^n \inner{f, k_{x_i}}_{\*H} k_{x_i}
    \end{aligned}
\end{equation}
where $k_x(\cdot) = k(\cdot, x)$ and $\inner{\cdot, \cdot}_{\*H}$ is the inner product in $\*H$. These operators are known as the ``extension operators'' of $\*K$ and $\tfrac{1}{n}K$, respectively, and it may be shown that each has the same eigenvalues as its corresponding operator, possibly up to zeros (see e.g. Propositions~8 and 9 of \citet{rosasco2010learning}).
We will make use of the following concentration inequality for $\*K_{\*H} - \hat{\*K}_{\*H}$ which is due to \citet{de2005learning} and appears as Theorem~7 of \citet{rosasco2010learning}.
\begin{lemma}[Theorem~7 of \citet{rosasco2010learning}]
\label{lemma:extension_concentration}
    The operators $\*K_{\*H}$ and $\hat{\*K}_{\*H}$ are Hilbert-Schmidt, and
    \begin{equation*}
        \norm{\hat{\*K}_{\*H} - \*K_{\*H}}_{\text{HS}} = \O\br{\frac{\log^{1/2}(n)}{n^{1/2}}}
    \end{equation*}
    with overwhelming probability.
\end{lemma}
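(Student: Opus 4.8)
The plan is to regard $\hat{\*K}_{\*H}$ as an unbiased empirical average of bounded i.i.d.\ random operators and then invoke a Hilbert-space concentration inequality. Write $\zeta_x := k_x \otimes k_x$ for the rank-one operator $f \mapsto \inner{f, k_x}_{\*H}\, k_x$, viewed as an element of the Hilbert space $\mathrm{HS}(\*H)$ of Hilbert--Schmidt operators on $\*H$, equipped with the inner product $\inner{A, B}_{\mathrm{HS}} = \tr(A^\ast B)$. By the definitions \eqref{eq:K_H_def}, $\hat{\*K}_{\*H} = \tfrac1n\sum_{i=1}^n \zeta_{x_i}$ and $\*K_{\*H} = \E_{x\sim\rho}[\zeta_x]$, so $\hat{\*K}_{\*H}$ is the empirical mean of the i.i.d.\ random vectors $\zeta_{x_1}, \ldots, \zeta_{x_n} \in \mathrm{HS}(\*H)$ with common expectation $\*K_{\*H}$. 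These vectors are uniformly bounded: by the reproducing property, $\norm{\zeta_x}_{\mathrm{HS}} = \norm{k_x}_{\*H}^2 = k(x,x) \leq 1$, using that $k$ is bounded (WLOG by one, as assumed at the start of Section~\ref{sec:proof_main}). In particular $\norm{\*K_{\*H}}_{\mathrm{HS}} \leq \E\norm{\zeta_x}_{\mathrm{HS}} \leq 1$ and $\norm{\hat{\*K}_{\*H}}_{\mathrm{HS}} \leq 1$, which already establishes the first assertion that both operators are Hilbert--Schmidt.

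For the quantitative bound, note that $\hat{\*K}_{\*H} - \*K_{\*H} = \tfrac1n\sum_{i=1}^n (\zeta_{x_i} - \E\zeta_x)$ is a normalised sum of centred, i.i.d., bounded random vectors in $\mathrm{HS}(\*H)$. First I would control its mean via independence and Jensen's inequality: $\E\norm{\hat{\*K}_{\*H} - \*K_{\*H}}_{\mathrm{HS}}^2 = \tfrac1n \E\norm{\zeta_x - \E\zeta_x}_{\mathrm{HS}}^2 \leq \tfrac1n \E\norm{\zeta_x}_{\mathrm{HS}}^2 \leq \tfrac1n$, whence $\E\norm{\hat{\*K}_{\*H} - \*K_{\*H}}_{\mathrm{HS}} \leq n^{-1/2}$. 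Then I would apply McDiarmid's bounded-differences inequality to the map $(x_1,\ldots,x_n) \mapsto \norm{\tfrac1n\sum_{i=1}^n(\zeta_{x_i} - \E\zeta_x)}_{\mathrm{HS}}$, which --- by the triangle inequality and $\norm{\zeta_x}_{\mathrm{HS}} \leq 1$ --- changes by at most $2/n$ when a single coordinate is altered. This yields $\P\br{\norm{\hat{\*K}_{\*H} - \*K_{\*H}}_{\mathrm{HS}} \geq n^{-1/2} + t} \leq \exp\br{-nt^2/2}$ for all $t \geq 0$; taking $t = \sqrt{2c\log(n)/n}$ for arbitrary $c > 0$ bounds the right-hand side by $n^{-c}$, so $\norm{\hat{\*K}_{\*H} - \*K_{\*H}}_{\mathrm{HS}} = \O\br{\log^{1/2}(n)/n^{1/2}}$ with overwhelming probability. (Alternatively, one could quote the Hilbert-space Bernstein inequality of \citet{de2005learning} directly; since the variance proxy $\E\norm{\zeta_x - \E\zeta_x}_{\mathrm{HS}}^2$ is $\O(1)$ here, this gives the same rate.)

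I do not expect a serious obstacle: this is a routine concentration statement for the empirical mean of bounded i.i.d.\ Hilbert-space-valued random variables. The points that need care are (i) identifying $\*K_{\*H}$ as precisely the population mean of the $\zeta_x$, so that $\hat{\*K}_{\*H}$ is genuinely unbiased --- this rests on the extension-operator formalism of \citet{rosasco2010learning}; (ii) that the relevant norm is the Hilbert--Schmidt norm, which is exactly why it is natural to treat the operators as vectors in $\mathrm{HS}(\*H)$ and use a vector (rather than matrix) concentration inequality; and (iii) the boundedness hypothesis on $k$, without which $\norm{\zeta_x}_{\mathrm{HS}} = k(x,x)$ could fail to be controlled.
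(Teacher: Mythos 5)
Your proof is correct and self-contained, whereas the paper simply imports this statement as Theorem~7 of \citet{rosasco2010learning} (which in turn relies on a Hilbert-space Bernstein inequality due to \citet{de2005learning}) without giving an argument. Your route --- viewing $\zeta_x := k_x \otimes k_x$ as a bounded random element of the Hilbert space of Hilbert--Schmidt operators on $\*H$, bounding the second moment of the deviation, and then concentrating around the mean by McDiarmid's bounded-differences inequality --- is more elementary than the Bernstein-type argument in the cited references and loses nothing at this rate: the one-step oscillation of $(x_1,\ldots,x_n) \mapsto \norm{\frac{1}{n}\sum_i(\zeta_{x_i} - \E\zeta_x)}_{\text{HS}}$ is at most $2/n$ because $\norm{\zeta_x}_{\text{HS}} = \norm{k_x}_{\*H}^2 = k(x,x) \leq 1$, so McDiarmid yields the sub-Gaussian tail $\exp(-nt^2/2)$, which at scale $t \asymp (\log n/n)^{1/2}$ already gives the overwhelming-probability bound, matching the Bernstein rate since the variance proxy is $\O(1)$. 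Two points worth stating explicitly to make the argument airtight: the identity $\*K_{\*H} = \E_{x\sim\rho}[\zeta_x]$ should be understood as a Bochner integral in the Hilbert--Schmidt space, which is well defined because $\norm{\zeta_x}_{\text{HS}}$ is uniformly bounded and $\*H$ is separable (continuity and boundedness of $k$ on a separable domain); and the norm computation $\norm{k_x \otimes k_x}_{\text{HS}} = \norm{k_x}_{\*H}^2 = k(x,x)$ rests on the reproducing property, which you invoke but could flag.
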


Let $\cbr{u_{\*H, i}}_{i \leq d}$ denote the eigenfunctions of $\*K_{\*H}$ corresponding to the eigenvalues $\cbr{\lambda_i}_{i\leq d}$, and let $\cbr{\hat{u}_{\*H, i}}_{i \leq d}$ denote the eigenfunctions of $\hat{\*K}$ corresponding to the eigenvalues $\scbr{\hat{\lambda}_i / n}_{i\leq d}$.
We define the (infinite-dimensional) ``matrices'' $U_{\*H, d}$ and $\hat{U}_{\*H, d}$ whose columns contain the eigenfunctions $\cbr{u_{\*H, i}}_{i \leq d}$ and $\cbr{\hat{u}_{\*H, i}}_{i \leq d}$, respectively. These ``matrices'' are well-defined with matrix multiplication is defined as usual with inner products taken in $\*H$. We will refer to $U_{\*H, d},\hat{U}_{\*H, d}$ and the subspaces spanned by their columns interchangably.

Let $H_d = U_{\*H,d}^\top \hat{U}_{\*H,d}$ with entries $H_d(i,j) = \inner{u_{\*H, i}, \hat{u}_{\*H, j}}_{\*H}$ and denote its singular values by $\xi_1,\ldots,\xi_d$. Then the principal angles between the subspaces $U_{\*H, d}$ and $\hat{U}_{\*H, d}$, which we will denote by $\theta_1,\ldots,\theta_d$, are define as via $\xi_i = \cos(\theta_i)$. Define the matrix
\begin{equation*}
    \sin\Theta\br{\hat{U}_{\*H, d}, U_{\*H, d}} = \diag\br{\sin(\theta_1),\ldots,\sin(\theta_d)}.
\end{equation*}
Let $d^\star$ be as in \eqref{eq:dstar} so that $\lambda_{d^\star} - \lambda_{d^\star + 1} = \Omega\br{n^{-1/8 + \Omega(1)}}$. Since $d^\star \geq d'$, by the Davis-Kahan theorem, we have that
\begin{equation*}
\begin{aligned}
    \norm{\sin\Theta\br{\hat{U}_{\*H,d'}, U_{\*H,d'}}} &\leq \norm{\sin\Theta\br{\hat{U}_{\*H,d^\star}, U_{\*H,d^\star}}} \leq \frac{\sqrt{2}\norm{\hat{\*K}_{\*H} - \*K_{\*H}}}{\lambda_{d^\star} - \lambda_{d^\star+1}} \\
    &\leq \frac{\sqrt{2}\norm{\hat{\*K}_{\*H} - \*K_{\*H}}_{\text{HS}}}{\lambda_{d^\star} - \lambda_{d^\star+1}} = \O\br{n^{-3/8 - \Omega(1)}}.
\end{aligned}
\end{equation*}
where the second inequality comes from the relationship $\norm{\cdot} \leq \norm{\cdot}_{\text{HS}}$, and the final inequality follows from Lemma~\ref{lemma:extension_concentration} and \eqref{eq:dstar_gap}.

We now come to constructing the matrix $W_{d}$. We denote the singular value decomposition of $H$ as $H = W_{d,1} \Xi W_{d, 2}^\top$ and define the matrix $W_{d}$ by
\begin{equation}
\label{eq:W_construction}
    W = W_{d,1} W_{d,2}^\top,
\end{equation}
which is known as the ``matrix sign'' of $H$. Let $\hat{\*P}_d$ and $\*P_d$ to be the projections onto the subspaces $\hat{U}_{\*H,d}$ and $U_{\*H,d}$, respectively. Then we have the following decomposition.
\begin{equation}
\label{eq:W_decomp}
\begin{aligned}
    W_{d'} \Lambda_{d'}^{1/2} - \hat{\Lambda}_{d'}^{1/2} W_{d'} &= (W_{d'} - H_{d'})\hat{\Lambda}_{d'} + \Lambda_{d'}(H_{d'} - W_{d'}) \\
    &+ U_{\*H,d'}^\top (\hat{\*K}_{\*H} - \*K_{\*H})(\hat{\*P}_{d'} - \*P_{d'})\hat{U}_{\*H,d'} \\
    &+ U_{\*H,d'}^\top(\hat{\*K}_{\*H} - \*K_{\*H}) U_{\*H, d'} H_{d'}
\end{aligned}
\end{equation}

We first observe that $\norm{H_{d'}} \leq \smallnorm{\hat{U}_{\*H,d'}} \smallnorm{U_{\*H,d'}} = 1$, and by \eqref{eq:sample_eval_bound}, $\norm{\hat{\Lambda}_{d'}}, \norm{\Lambda_{d'}} = \O(1)$.
In addition, following the same steps as in the proof Lemma~6.7 of \citet{cape2019two} (who prove similar (in)equalities for finite-dimensional matrices) we have that
\begin{equation*}
    \begin{aligned}
        \norm{\hat{\*P}_{d'} - \*P_{d'}} &= \norm{\sin\Theta\br{\hat{U}_{\*H,d'}, U_{\*H,d'}}} = \O\br{n^{-3/8 - \Omega(1)}} \\
        \norm{H_{d'} - W_{d'}} &\leq \norm{\sin\Theta\br{\hat{U}_{\*H,d'}, U_{\*H,d'}}}^2 = \O\br{n^{-3/4 - \Omega(1)}}.
    \end{aligned}
\end{equation*}

We now turn to bounding $\smallnorm{U_{\*H,d}^\top(\hat{\*K}_{\*H} - \*K_{\*H}) U_{\*H, d}}$. To condense notation, let $Q = U_{\*H,d}^\top(\hat{\*K}_{\*H} - \*K_{\*H}) U_{\*H, d}$. We will bound $\norm{Q}$
using a classical $\epsilon$-net argument. Let $\S_\epsilon^{d-1}$ be an $\epsilon$-net of the $(d-1)$-dimensional unit sphere $\S^{d-1} := \cbr{v : \smallnorm{v} = 1}$, that is, a subset of $\S^{d-1}$ such that for any $v \in \S^{d-1}$, there exists some $w_v \in \S_\epsilon^{d-1}$ such that $\norm{v - w_v} < \epsilon$. Then, we have that
\begin{equation*}
\begin{aligned}
    \norm{Q} &= \max_{v:\smallnorm{v}\leq 1} \abs{v^\top Q v} \\
    &= \max_{v:\smallnorm{v}\leq 1} \abs{\br{v - w_v + w_v}^\top Q \br{v - w_v + w_v}} \\
    &\leq \br{\epsilon^2 + 2\epsilon}\norm{Q} + \max_{w \in \S_\epsilon^{d-1}} \abs{w^\top Q w}.
\end{aligned}
\end{equation*}
With $\epsilon = 1/3$, we have
\begin{equation*}
    \norm{Q} \leq \frac{9}{2}\max_{w \in \S_\epsilon^{d-1}} \abs{w^\top Q w}.
\end{equation*}
Using the definitions \eqref{eq:K_H_def}, its $(l,m)$th entry can be calculated as
\begin{equation}
\label{eq:kuu}
    Q(l,m) = \frac{1}{n^2} \sum_{i,j=1}^n k(x_i, x_j)u_l(x_i)u_m(x_j) - \iint k(x,y)u_l(x)u_m(y) \d \rho(x) \d \rho(y),
\end{equation}
and so for a given $w \in \S_\epsilon^{d-1}$, we have that
\begin{equation*}
\begin{aligned}
     \abs{w^\top Q w} &= \abs{\sum_{l,m=1}^d Q(l,m)w(l)w(m)} \\
    &=\abs{\sum_{l,m=1}^d w(l) w(m) \br{\frac{1}{n^2} \sum_{i,j=1}^n k(x_i, x_j)u_l(x_i)u_m(x_j) - \iint k(x,y)u_l(x)u_m(y) \d \rho(x) \d \rho(y)}} \\
    &\leq \max_{1\leq l \leq d} \norm{u_l}_\infty \abs{\sum_{i=1}^n \cbr{\sum_{l,m=1}^d w(l) w(m) \br{\frac{u_m(x_i)}{n} - \int u_m(x) \d \rho(x)}}}
\end{aligned}
\end{equation*}
where we have used that $k(x,y) \leq 1$ for all $x,y\in \*X$. This is a sum of independent random variables, so  by Hoeffding's inequality, we that that
\begin{equation*}
    \P\br{\abs{w^\top Q w} \geq t} \leq 2\exp\br{-\frac{2nt^2}{\max_{1\leq l \leq d}\norm{u_l}_\infty^4}}
\end{equation*}
where we have used that $\sum_{l,m=1}^d w(l) w(m) = 1$. The set $\S_{1/3}^{d-1}$ can be selected so that its cardinality is no greater than $18^d$ (see, for example, \citet{pollard1990empirical}), so using a union bound we have that
\begin{equation*}
    \begin{aligned}
        \P\br{\norm{Q} \geq t} &\leq \P\br{\max_{w \in \S^{d-1}_{1/3}} \abs{w^\top Q w} \geq \frac{2}{9}t} \\
        &\leq \sum{w \in \S^{d-1}_{1/3}} \P \br{\abs{w^\top Q w} \geq \frac{2}{9}t} \\
        &\leq 2 \cdot 18^{d'} \exp\br{-\frac{8nt^2}{81\cdot \max_{1\leq l\leq d'} \norm{u_l}_\infty^4}} \\
        &\leq 2 \exp\br{d' \log(18) - \frac{8nt^2}{81\cdot \max_{1\leq l\leq d'} \norm{u_l}_\infty^4}} \\
        &\leq 2 \exp\br{C\br{n^{1/8 - \Omega(1)} - n^{3/4}t^2}}
    \end{aligned}
\end{equation*}
where we have used that $d' = \O(n^{1/8 - \Omega(1)})$ and $\max_{1\leq l \leq d'} \norm{u_l}_\infty = \O(n^{1/16})$. Choosing $t = 2 n^{-5/16 - \Omega(1)}$ we have that
\begin{equation*}
        \P\br{\norm{Q} \geq 2n^{-5/16 - \Omega(1)}} \leq 2\exp\br{-n^{1/8}} \leq n^{-c}
\end{equation*}
for any $c>0$ for large enough $n$. Therefore
\begin{equation*}
    \norm{Q} = \O\br{n^{-5/16-\Omega(1)}}
\end{equation*}
with overwhelming probability.

Combining the above bounds with \eqref{eq:W_decomp} we obtain that
\begin{equation*}
    \norm{W_{d'} \Lambda_{d'}^{1/2} - \hat{\Lambda}_{d'}^{1/2} W_{d'}} = \O\br{n^{-5/16 - \Omega(1)}}
\end{equation*}
which establishes \eqref{eq:swap_cond} and therefore completes the proof.

\section{Additional details about the experiments}
\label{sec:experiments_extra}

In this section, we provide some additional details and plots relating to the experiments in Section~\ref{sec:experiments}.

\subsection{Details about the datasets}
\label{sec:dataset_details}

\emph{GMM} is a synthetic dataset of 1,000 simulated data points from a 10-component Gaussian mixture model with unit isotropic covariances and means of size ten on the axes. \emph{Abalone} \citep{abalone} and \emph{Wine Quality} \citep{wine_quality} are popular benchmark datasets which we standarised in the usual way by centering and rescaling each feature to have unit variance. We drop the binary \emph{Sex} feature from the \emph{Abalone} dataset to retain only the continuous features. \emph{MNIST} \citep{deng2012mnist} is a dataset of handwritten digits, represented as 28x28 gray-scale pixels which we concatenate into 784 dimensional vectors. These four datasets might be described as ``low-dimensional'', and are thus representative of the theory we present in Section~\ref{sec:main_result}.

In addition, we consider two ``high-dimensional'' datasets. \emph{20 Newsgroups} \citep{lang1995newsweeder} is a popular natural language dataset of messages collected from twenty different ``newnews'' newsgroups. We remove stop-word and words which appear in fewer than 5 documents or more than 80\% of them, and convert each document into a vector using term frequency-inverse document frequency (tf-idf) features for each word. \emph{Zebrafish} \citep{wagner2018single} is a dataset of single-cell gene expression in a zebrafish embryo taken during their first day of development. We subsample 10\% of the cells, and process the data following the steps in \citet{wagner2018single}.

\begin{figure}[t]
    \centering
    \includegraphics[width=\textwidth]{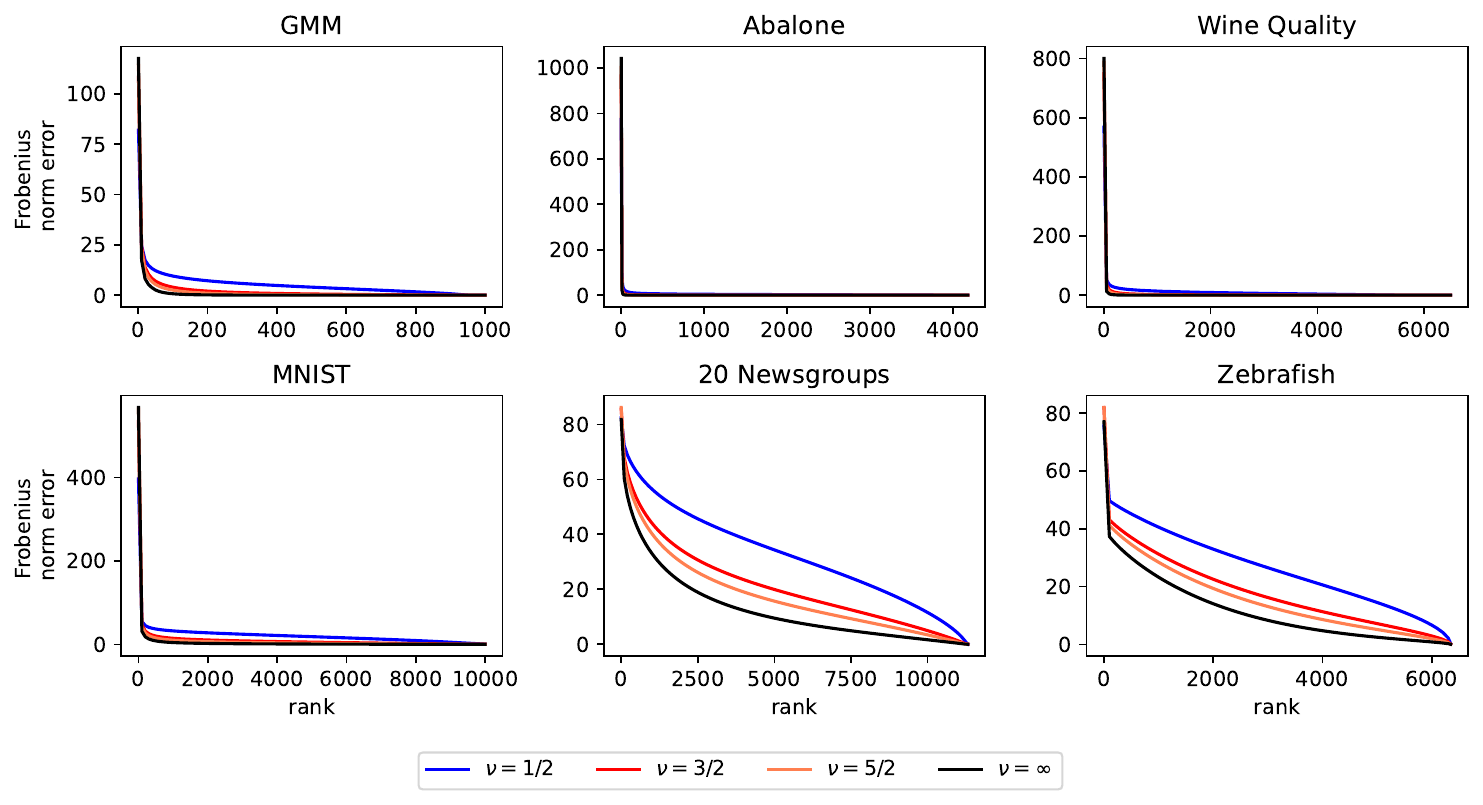}
    \caption{The Frobenius-norm error against rank for low-rank approximations of kernel matrices constructed from a collection of datasets. The kernel matrices are constructed using Mat\'ern kernels with a range of smoothness parameters, each of which is represented by a line in each plot. Details of the experiment are provided in Section~\ref{sec:experiments}.}
    \label{fig:frob_error_experiments}
\end{figure}

\subsection{Frobenius norm errors}

Figure~\ref{fig:frob_error_experiments} shows the Frobenius norm error of the low-rank approximations. For the low-dimensional datasets \emph{GMM}, \emph{Abalone}, \emph{Wine Quality} and \emph{MNIST}, the Frobenius norm error decays very quickly. However for the high-dimensional datasets \emph{20 Newsgroups} and \emph{Zebrafish}, the Frobenius norm error decays much more slowly. As pointed out in the main text, the Frobenius norm error of the \emph{20 Newsgroups} dataset does not exhibit the sharp drop between $d=2500$ and $d=3000$ that the maximum entrywise error exhibits.

\subsection{Implementation details}

The experiments were performed on the HPC cluster at Imperial College London with 8 cores and 16GB of RAM. The \emph{GMM} experiment took less than 1 minute; the \emph{Abalone}, \emph{Wine Quality}, \emph{MNIST} and \emph{Zebrafish} experiments took less than 8 hours; and the \emph{20 Newsgroups} experiment took less than 24 hours. 


\end{document}